\documentclass[11pt]{amsart}
\usepackage{geometry}                
\geometry{letterpaper}                   
\usepackage{graphicx}
\usepackage{amssymb}
\usepackage{epstopdf}
\usepackage{float}
\usepackage{amsmath,amsfonts,amsthm,url,array,etoolbox}
\usepackage{enumerate}
\usepackage{tikz}
\usetikzlibrary{calc,decorations.markings}
\theoremstyle{plain}
\newtheorem{thm}{Theorem}[section]
\newtheorem{lem}[thm]{Lemma}
\newtheorem{prop}[thm]{Proposition}

\theoremstyle{definition}

\theoremstyle{remark}
\newtheorem{rem}{Remark}

\DeclareGraphicsRule{.tif}{png}{.png}{`convert #1 `dirname #1`/`basename #1 .tif`.png}

\begin{document}
\title[]{The relative cohomology of abelian covers of the flat pillowcase}
\author[]{Chenxi Wu}
\maketitle
\begin{abstract}
We calculate the action of the group of affine diffeomorphisms on the relative cohomology of pair $(M,\Sigma)$, where $M$ is a square-tiled surface that is a normal abelian cover of the flat pillowcase, and as an application, answer a question raised by Smillie and Weiss.
\end{abstract}
\section{Introduction}

The flat pillowcase has a half translation structure, which induces a half-translation structure on its branched covers. We give a comprehensive treatment of the relative cohomology of branched abelian normal covers of the flat pillowcase, the action of the affine diffeomorphism group of these branched cover on their relative cohomology, as well as invariant subspaces under this group action. These subspaces are orthogonal under an invariant Hermitian norm. The corresponding question for absolute cohomology is a classical one related to the monodromy of the hypergeometric functions which dates back to Euler and is outlined in \cite{dm86} (see also \cite{wright2012}). Most previous work on this topic focuses on the case of absolute cohomology, and uses holomorphic methods. Due to the recent interest in translation surfaces and $SL(2,\mathbb{R})$-orbit closures in them, there is also interest in the relative cohomology $H^1(M,\Sigma;\mathbb{C})$ because the relative cohomology is the tangent space of strata, and because these branched covers are in $SL(2,\mathbb{R})$ closed orbits. For example, Matheus and Yoccoz \cite{my09} calculate the action of the full affine group on the relative cohomology of two well-known translation surfaces, the Wollmilchsau and the Ornithorynque, both of which are examples of abelian branched covers of the flat pillowcase. In this paper, we will give a complete description of the action of the full affine group on relative cohomology of all abelian branched covers of the pillowcase.\\

We will begin by showing that there is a direct sum decomposition of the relative cohomology, and calculate the dimension of the summands.

\begin{thm}
Let $M$ be a branched cover of the pillowcase with deck group $G$. Let $\Sigma\subset M$ be the preimage of the four cone points of the pillowcase. Let $\Delta$ be the set of irreducible representations of a finite abelian group $G$. These are all one dimensional, hence are homomorphisms from $G$ to $\mathbb{C}^*$. Deck transformations give an action of $G$ on $H^1(M,\Sigma;\mathbb{C})$. Let $H^1(\rho)$ be the sum of $G$-submodules of $H^1(M,\Sigma;\mathbb{C})$ isomorphic to $\rho$. 
\begin{enumerate}
\item  
$$H^1(M,\Sigma;\mathbb{C})=\bigoplus_{\rho\in\Delta} H^1(\rho)$$
this decomposition is preserved by the action of the affine group $\mathbf{Aff}$, while the factors may be permuted. 
\item The dimension of $H^1(\rho)$ is 3 if $\rho$ is the trivial representation and 2 otherwise.
\end{enumerate}
\end{thm}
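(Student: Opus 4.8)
The plan is to handle the two parts separately: part (1) is formal representation theory, while part (2) reduces to an equivariant Euler characteristic computation in which the branch data conveniently cancels.

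\emph{Part (1).} Since $G$ is finite and we work over $\mathbb{C}$, the group algebra $\mathbb{C}[G]$ is semisimple, so every $\mathbb{C}[G]$-module carries a canonical isotypic decomposition; for $H^1(M,\Sigma;\mathbb{C})$ this decomposition is precisely $\bigoplus_{\rho\in\Delta}H^1(\rho)$, and being intrinsic it is automatically respected, up to relabelling, by any symmetry of the pair that normalizes $G$. Concretely, an element $\phi\in\mathbf{Aff}$ normalizes the deck group $G$ (which is a normal subgroup of $\mathbf{Aff}$), so conjugation by $\phi$ induces an automorphism $c_\phi$ of $G$ and hence a permutation $\rho\mapsto\rho\circ c_\phi^{-1}$ of $\Delta$; since $\phi^{*}$ intertwines the $G$-action on $H^1(M,\Sigma;\mathbb{C})$ with its $c_\phi$-twist, $\phi^{*}$ carries $H^1(\rho)$ isomorphically onto $H^1(\rho\circ c_\phi^{-1})$. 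This proves (1).

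\emph{Part (2).} I would compute the class $[H^1(M,\Sigma;\mathbb{C})]$ in the representation ring $R(G)$ in two ways (throughout, $M$ is taken connected, as is implicit in the setup). First, the long exact sequence of the pair $(M,\Sigma)$, together with $H^0(M,\Sigma;\mathbb{C})=0$ (as $M$ is connected and $\Sigma\neq\varnothing$) and $H^1(\Sigma;\mathbb{C})=0$, gives a $G$-equivariant short exact sequence $0\to H^0(M;\mathbb{C})\to H^0(\Sigma;\mathbb{C})\to H^1(M,\Sigma;\mathbb{C})\to H^1(M;\mathbb{C})\to 0$, whence $[H^1(M,\Sigma;\mathbb{C})]=[H^1(M;\mathbb{C})]+[\mathbb{C}[\Sigma]]-[\mathbf{1}]$, where $[\mathbb{C}[\Sigma]]$ is the class of the permutation module on $\Sigma$ and $\mathbf{1}$ is the trivial representation. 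Second, to pin down $[H^1(M;\mathbb{C})]$ I would use additivity of the compactly supported equivariant Euler characteristic $\chi_{G,c}$ over the decomposition $M=M^{\circ}\sqcup\Sigma$ with $M^{\circ}=M\setminus\Sigma$: since $M^{\circ}$ is a free $G$-cover of $Y$, the pillowcase with its four cone points deleted, one gets $\chi_{G,c}(M^{\circ})=\chi_{c}(Y)\,[\mathbb{C}[G]]=-2\,[\mathbb{C}[G]]$, while $\chi_{G,c}(\Sigma)=[\mathbb{C}[\Sigma]]$; and because $M$ is a closed connected oriented surface on which $G$ acts preserving orientation, $H^{0}(M;\mathbb{C})\cong H^{2}(M;\mathbb{C})\cong\mathbf{1}$, so $\chi_{G}(M)=2[\mathbf{1}]-[H^{1}(M;\mathbb{C})]$. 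Equating yields $[H^{1}(M;\mathbb{C})]=2[\mathbf{1}]+2[\mathbb{C}[G]]-[\mathbb{C}[\Sigma]]$, and substituting into the first identity the $[\mathbb{C}[\Sigma]]$ terms cancel, leaving $[H^{1}(M,\Sigma;\mathbb{C})]=[\mathbf{1}]+2[\mathbb{C}[G]]$. Since for the abelian group $G$ every irreducible is one-dimensional and occurs with multiplicity one in $\mathbb{C}[G]$, reading off multiplicities gives $\dim H^1(\rho)=2$ for $\rho\neq\mathbf{1}$ and $\dim H^1(\mathbf{1})=3$, which is (2). (As a sanity check, summing gives $2|G|+1$, which matches $\dim H^1(M,\Sigma;\mathbb{C})=2g_M+|\Sigma|-1$ via Riemann--Hurwitz.)

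The step I expect to require the most care is the equivariant input $\chi_{G,c}(M^{\circ})=\chi_{c}(Y)\,[\mathbb{C}[G]]$: one must confirm that deleting the \emph{entire} preimage of the four cone points leaves a genuinely free $G$-action (which is built into the construction of $M$ from a representation of $\pi_{1}(Y)$, even when some local monodromies are trivial) and then invoke that the compactly supported Euler characteristic of a rank-$r$ local system on $Y$ equals $r\,\chi_{c}(Y)$, applied to the regular-representation local system $\pi_{*}\mathbb{C}$ on $Y$ whose $G$-module structure realizes $[\mathbb{C}[G]]$, with $\chi_{c}(Y)=2-4=-2$. By contrast, the cancellation of $[\mathbb{C}[\Sigma]]$ means the explicit decomposition of the permutation module on $\Sigma$ into induced representations $\bigoplus_i \mathrm{Ind}_{G_i}^{G}\mathbf{1}$ is never needed, which is what makes the final count so short.
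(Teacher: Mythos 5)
Your argument is correct, but part (2) takes a genuinely different route from the paper. The paper works with the explicit relative cellular cochain complex coming from the square tiling: since every vertex lies in $\Sigma$, the complex is just $0\to C^1\to C^2$ with $C^1\cong(\mathbb{C}[G])^4$ and $C^2\cong(\mathbb{C}[G])^2$ as $G$-modules, and the image of $d^1$ is computed to be $(1,1)\mathbb{C}[G]\oplus(0,1)L$ with $L$ the augmentation ideal (this is where connectedness enters, via the $g_j$ generating $G$); semisimplicity then gives $H^1(M,\Sigma;\mathbb{C})\cong(\mathbb{C}[G])^2\oplus\mathbb{C}$ and the dimension count. You instead compute the class $[\mathbf{1}]+2[\mathbb{C}[G]]$ in $R(G)$ from the long exact sequence of the pair plus additivity of the compactly supported equivariant Euler characteristic over $M=M^\circ\sqcup\Sigma$, with connectedness entering through $H^0(M)\cong H^2(M)\cong\mathbf{1}$; the cancellation of $[\mathbb{C}[\Sigma]]$ is a genuine simplification (the paper never needs to decompose the permutation module either, but you avoid even the augmentation-ideal computation), and your method extends verbatim to non-abelian $G$, giving multiplicity $2\dim\rho$ for nontrivial $\rho$. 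What you lose is the explicit model $H^1(\rho)=\{(a,b,c,d)\in D_\rho^4: d^1(a,b,c,d)=0\}$, which the paper relies on heavily afterwards (formulas (5)--(8) for the generators, the Hodge form, the subspaces $H^1_a$), and which is also how the paper establishes part (1): it verifies invariance of the decomposition on the explicit generators $t,s,r,f$ and computes that $m_\psi$ sends $H^1(\rho)$ to $H^1(\psi^{-1}\rho)$, whereas you argue abstractly that conjugation permutes isotypic components. Both versions of part (1) rest on the same unproved (in the paper as well) fact that $\mathbf{Aff}$ normalizes the deck group, so no complaint there; just note that your one delicate step, $\chi_{G,c}(M^\circ)=\chi_c(Y)[\mathbb{C}[G]]$, is cleanly justified by the Lefschetz-type identity $\operatorname{tr}(g\mid H^\ast_c(M^\circ))=\chi_c((M^\circ)^g)$, which vanishes for $g\neq e$ by freeness.
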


The space $H^1(\rho)$ can also be described as cohomology of the pillowcase with twisted coefficients as in \cite{dm86}, \cite{thruston98}. Let $H^1_{abs}(\rho)$ be the sum of $G$-submodules of $H^1(M;\mathbb{C})$ isomorphic to $\rho$, then there is also a splitting $H^1(M)=\bigoplus_{\rho}H^1_{abs}(\rho)$ which was described in \cite{wright2012}.\\

There is a natural projection $r: H^1(M,\Sigma;\mathbb{C})\rightarrow H^1(M;\mathbb{C})$ induced by the inclusion $(M,\emptyset)\rightarrow(M,\Sigma)$, which is also equivariant under the deck group $G$. The kernel of $r$ is called the $rel$-space. It is interesting to know when this projection splits, in other words, when $rel$ has an $\mathbf{Aff}$-invariant complement.\\ 

Because $r$ is $G$-equivariant and surjective, it sends $H^1(\rho)$ surjectively to $H^1_{abs}(\rho)$. For abelian branched covers of the pillowcase, we can describe $r$ by describing its restriction on each $H^1(\rho)$. The result can be summarized as follows:
\begin{thm}Let $M$, $\Sigma$, $r$ be as above, then:
\begin{enumerate}
\item If two or four of the four $\rho(g_j)$ are equal to $1$ then $r|_{H^1(\rho)}=0$, hence $H^1_{abs}(\rho)=0$.
\item If only one of the four $\rho(g_j)$ is $1$ then $\ker(r|_{H^1(\rho)})\rightarrow H^1(\rho)\rightarrow r(H^1(\rho))$ does not split as a $\Gamma$-module. In this case $H^1_{abs}(\rho)$ has dimension 1.
\item In all other cases, $r|_{H^1(\rho)}$ is bijective hence splits. In this case $H^1_{abs}(\rho)$ has dimension $2$.
\end{enumerate}
In summary, $r: H^1(M,\Sigma;\mathbb{C})\rightarrow H^1(M;\mathbb{C})$ splits if and only if case (2) does not appears, i.e. if and only if for any $i\in\{1,2,3,4\}$, either the subgroup generated by $g_i$, $(g_i)$, contains $g_j$ for some $j\neq i$, or $G/(g_i)=(\mathbb{Z}/2)^2$. 
\end{thm}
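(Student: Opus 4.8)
The plan is to pass to twisted coefficients on the four-punctured sphere, where parts (1) and (3) and the numerics of (2) become bookkeeping with a long exact sequence; then to settle the ``in summary'' equivalence by an elementary character count; and finally to reduce the nonsplitting assertion in (2) to the nonvanishing of a single relative period, which is the one genuinely geometric point. Write $S$ for the pillowcase with its four cone points deleted --- a four-punctured sphere --- so that $M\setminus\Sigma\to S$ is an honest $G$-cover with monodromy $g_j$ around the $j$-th puncture $p_j$, and for $\rho\in\Delta$ let $L_\rho$ be the rank-one local system on $S$ with monodromy $\rho(g_j)$ about $p_j$. As the excerpt recalls, excision identifies $H^1(\rho)$ with $H^1_c(S,L_\rho)$ and identifies $r|_{H^1(\rho)}$ with the natural map $c_\rho\colon H^1_c(S,L_\rho)\to H^1(S,L_\rho)$, whose image is $H^1_{abs}(\rho)$. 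Let $k=k(\rho)$ be the number of indices $j$ with $\rho(g_j)=1$; since $\prod_j\rho(g_j)=1$ we have $k\in\{0,1,2,4\}$. The comparison triangle between $j_!L_\rho$ and $Rj_*L_\rho$ for the open immersion $j\colon S\hookrightarrow S^2$ has its cone supported at the punctures, contributing a copy of $\mathbb C$ in degrees $0$ and $1$ at each $p_j$ with $\rho(g_j)=1$ and nothing otherwise, so that
$$0\to H^0(S,L_\rho)\to\mathbb C^{k}\to H^1_c(S,L_\rho)\xrightarrow{\,c_\rho\,}H^1(S,L_\rho)\to\mathbb C^{k}\to H^2_c(S,L_\rho)\to 0$$
is exact. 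Using $H^0(S,L_\rho)=H^2_c(S,L_\rho)=0$ for $\rho\neq1$, $H^0(S,\mathbb C)=H^2_c(S,\mathbb C)=\mathbb C$ for $\rho=1$, and $\dim H^1_c(S,L_\rho)=\dim H^1(\rho)$ from the first theorem, one reads off immediately that $\operatorname{rank}c_\rho=2$ if $k=0$, $=1$ if $k=1$, and $=0$ if $k\in\{2,4\}$. This gives parts (1) and (3) --- for $k=0$ the map $c_\rho$ is an isomorphism, hence splits; for $k\in\{2,4\}$ it is the zero map onto $H^1_{abs}(\rho)=0$, which splits trivially --- and the three dimension counts $\dim H^1_{abs}(\rho)=2,1,0$.

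Next I would prove the equivalence in the ``in summary'' sentence. Case (2) occurs for some $\rho$ exactly when there is an index $j$ and a character $\bar\rho$ of $H:=G/(g_j)$ that is nontrivial on the image in $H$ of each of the other three generators $g_i$: such a $\bar\rho$ pulls back to a $\rho$ with $\rho(g_j)=1$ and $\rho(g_i)\neq1$ for $i\neq j$, and conversely, since in case (2) the index $j$ with $\rho(g_j)=1$ is unique, $\rho$ must factor through $H$ this way. Because $G=\langle g_1,\dots,g_4\rangle$, the three images $a,b,c\in H$ of the $g_i$ with $i\neq j$ generate $H$ and satisfy $a+b+c=0$, so any two of them already generate $H$, whence the three subgroups $a^{\perp},b^{\perp},c^{\perp}\subset\widehat H$ meet pairwise only in the trivial character and
$$\bigl|\,a^{\perp}\cup b^{\perp}\cup c^{\perp}\,\bigr|=\frac{|H|}{\operatorname{ord}(a)}+\frac{|H|}{\operatorname{ord}(b)}+\frac{|H|}{\operatorname{ord}(c)}-2.$$
No desired $\bar\rho$ exists iff this union exhausts $\widehat H$; if one of $a,b,c$ is $0$ this is automatic, while if all are nonzero it says $1/\operatorname{ord}(a)+1/\operatorname{ord}(b)+1/\operatorname{ord}(c)=1+2/|H|$, and a short case check shows this forces at least two of them to have order $2$, hence (any two generating $H$) $H$ to be generated by two involutions, hence $H\cong(\mathbb Z/2)^{e}$ with $e\le2$, the possibilities $e=1$ and $H\cong\mathbb Z/6$ being excluded by $a+b+c=0$. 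So no $\bar\rho$ exists iff some one of $a,b,c$ is $0$ --- equivalently $g_i\in(g_j)$ for some $i\neq j$ --- or $H\cong(\mathbb Z/2)^2$. Running this over all $j$, case (2) never occurs iff for every $i$ either $(g_i)$ contains some $g_j$, $j\neq i$, or $G/(g_i)\cong(\mathbb Z/2)^2$, which is the stated condition.

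It then remains to tie this to the splitting of $r$ globally. If case (2) never occurs, then on each summand $H^1(\rho)$ the map $r$ is either zero onto $\{0\}$ or an isomorphism onto $H^1_{abs}(\rho)$, so there is a canonical section $s_\rho$ (the zero map, respectively the inverse isomorphism); since $\mathbf{Aff}$ permutes the $H^1(\rho)$ compatibly with $r$ and $k(\rho)$ is constant on $\mathbf{Aff}$-orbits, $\bigoplus_\rho s_\rho$ is $\mathbf{Aff}$-equivariant and splits $r$. Conversely, if case (2) occurs for some $\rho$, an $\mathbf{Aff}$-equivariant section of $r$ restricts over the $\mathbf{Aff}$-orbit of $\rho$ to a section of an induced module, and restricting further to the stabilizer $\Gamma$ of $\rho$ would yield a $\Gamma$-equivariant splitting of $\ker(r|_{H^1(\rho)})\to H^1(\rho)\to r(H^1(\rho))$. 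So the global statement follows once the nonsplitting in (2) is established.

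That nonsplitting is the main obstacle, and it is where the flat geometry of $M$, not merely its topological type, enters. My plan is to exhibit a single parabolic element $\gamma\in\Gamma$ --- a multi-twist along a cylinder decomposition of $M$, replaced by a power so that it acts trivially on $G$ and hence fixes $\rho$ --- and to show it forces $H^1(\rho)$ to be $\Gamma$-indecomposable. In case (2) the space $H^1_{abs}(\rho)$ is one-dimensional and is a $\Gamma$-stable subspace of the integral module $H^1(M)$, on which a parabolic acts unipotently; hence $\gamma$ acts as the identity on $H^1_{abs}(\rho)$, and by equivariance of $r$ the operator $\gamma_*-\mathrm{id}$ maps $H^1(\rho)$ into $\ker(r|_{H^1(\rho)})$ and kills that line, so it descends to a well-defined ``relative variation'' map $H^1_{abs}(\rho)\to\ker(r|_{H^1(\rho)})$. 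If this map is nonzero, then $\gamma_*-\mathrm{id}$ is a nonzero nilpotent on $H^1(\rho)$ with kernel and image both equal to $\ker(r|_{H^1(\rho)})$, so $\gamma_*$ is a single Jordan block whose unique invariant line is $\ker(r|_{H^1(\rho)})$; a $\Gamma$-invariant complement would be a second invariant line, so none exists. The whole difficulty thus reduces to the nonvanishing of this relative variation for a suitable $\gamma$ --- concretely, to showing that some rational multi-twist genuinely moves a relative period of $H^1(\rho)$ around the puncture $p_j$. I would verify this by an explicit cylinder decomposition of $M$ (as in the Wollmilchsau/Ornithorynque computations of Matheus--Yoccoz), or alternatively by identifying the extension class of $0\to\ker(r|_{H^1(\rho)})\to H^1_c(S,L_\rho)\to H^1_{abs}(\rho)\to0$ with a residue at $p_j$ in the Deligne--Mostow twisted-cohomology picture and checking that the monodromy action does not kill it. This period nonvanishing is the crux; everything before it is the long exact sequence of the first paragraph together with the elementary counting lemma.
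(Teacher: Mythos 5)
Your treatment of parts (1) and (3), the dimension counts, and the ``in summary'' equivalence is correct, and it takes a genuinely different route from the paper. The paper deduces (1) and (3) from the signature computation of the Hodge form $A_G$ (Theorem 4.1), using that the radical of $A_G$ on $H^1(\rho)$ is exactly $\ker(r|_{H^1(\rho)})$, so that $n_0$ counts the indices $j$ with $\rho(g_j)=1$; your exact sequence comparing $j_!L_\rho$ with $Rj_*L_\rho$ produces the same ranks purely topologically, with no Hermitian form, and is arguably cleaner for this purpose. Your character-counting derivation of the group-theoretic criterion in the ``in summary'' sentence is also sound (the paper states that equivalence without proof), as is your reduction of the global splitting question to the per-$\rho$ statements; that reduction uses only that deck transformations lie in $\mathbf{Aff}$, so an $\mathbf{Aff}$-invariant complement of $\ker r$ decomposes along the isotypic pieces $H^1(\rho)$, together with the (true, but worth a sentence) fact that $k(\rho)$ is constant on $\mathbf{Aff}$-orbits because the induced automorphism $\psi$ permutes the multiset $\{g_1,\dots,g_4\}$.

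The genuine gap is the nonsplitting assertion in part (2), which you correctly identify as the crux but do not prove. Your plan --- find a multitwist $\gamma\in\Gamma$ acting as a nontrivial unipotent on $H^1(\rho)$, so that $H^1(\rho)$ is a single Jordan block whose unique invariant line is $\ker(r|_{H^1(\rho)})$ --- would work, but the required nonvanishing is left as something you ``would verify,'' and it is not automatic: in case (2) the natural candidates, the lifts $\gamma_1,\gamma_2$ of the horizontal and vertical Dehn twists, are individually \emph{semisimple} on $H^1(\rho)$ (by the paper's formulas (15)--(16) their eigenvalues are $\rho(g_1g_2),1$ and $\rho(g_2g_3),1$, which are distinct when, say, $\rho(g_2)=1$ and the other $\rho(g_j)\neq1$), so no power of either one supplies the nontrivial unipotent; one must pass to suitable words in $\gamma_1,\gamma_2$ (the translations in the Euclidean triangle group of Section 6) and check nontriviality there. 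The paper's actual argument (Remark 3) sidesteps unipotents entirely: it writes down the $2\times 2$ matrices of $\gamma_1^*$ and $\gamma_2^*$ on $H^1(\rho)$ and observes that $(1,-1,0,0)$, which spans $\ker(r|_{H^1(\rho)})$, is their only common eigenvector, so no $\Gamma_1$-invariant (a fortiori no $\Gamma$-invariant) complementary line exists. To complete your proof you need either this short matrix computation or an explicit nontrivial translation (e.g.\ a commutator of $\gamma_1$ and $\gamma_2$); as written, part (2) is an outline rather than a proof.
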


For example, when $M$ is the Wollmilchsau, $r$ splits, because $(g_1)=(g_2)=(g_3)=(g_4)=G$.\\

The dimension of $H^1_{abs}(\rho)$ was calculated in \cite{wright2012}.\\

There is a natural invariant Hermitian form on $H^1(M,\Sigma;\mathbb{C})$ (see \cite{dm86}, \cite{thruston98} or Section 4). In case (1) this form is trivial, in case (2) this form induces an Euclidean structure on its complex projectivization $\mathbb{P}(H^1(\rho))$, and in case (3) it may be positive definite, negative definite or indefinite, depending on the arguments of $\rho(g_i)$, hence induces either a spherical or hyperbolic structure on $\mathbb{P}(H^1(\rho))$. The relation between the signature of this Hermitian form on $\rho(g_i)$ and the arguments of $\rho(g_i)$ is a consequence of Riemann-Roch. It was given in \cite{dm} and is included here as Theorem 4.1.\\

In section 5 we will describe a subgroup of finite index $\Gamma_1$ of the affine diffeomorphism group. The subspaces $H^1(\rho)$ are invariant under $\Gamma_1$ and the action of $\Gamma_1$ can be described as follows:\\ 

\begin{thm}If $\rho$ satisfy the conditions in case (2) and (3) of Theorem 1.2, the action of $\Gamma_1\subset\mathbf{Aff}$ on $\mathbb{P}H^1(\rho)=\mathbb{CP}^1$ is through an index-2 subgroup of a (Euclidean, hyperbolic or spherical) triangle group.
\end{thm}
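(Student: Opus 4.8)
The plan is to reduce the $\Gamma_1$-action on $\mathbb{P}H^1(\rho)=\mathbb{CP}^1$ to the classical hypergeometric monodromy over the modular curve, in the spirit of \cite{dm86}, \cite{dm}. Recall from the remark after Theorem 1.1 that in cases (2) and (3) of Theorem 1.2, $H^1(\rho)$ is the two--dimensional cohomology of the pillowcase $P$ punctured at its four cone points $x_1,\dots,x_4$ with coefficients in the rank--one local system $\mathcal{L}_\rho$ with local monodromy $\rho(g_i)$ at $x_i$; since $G$ is finite, $\rho(g_i)=e^{2\pi i\mu_i}$ with $\mu_i\in\mathbb{Q}\cap[0,1)$. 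In Section~5 we will define $\Gamma_1$ so that each of its elements descends to an affine diffeomorphism of $P$ inducing the identity on the deck group $G$ --- hence fixing each puncture $x_i$ and each $\rho\in\Delta$ --- and so that the image of $\Gamma_1\to\mathbf{Aff}(P)$ is the full preimage of the level--two congruence subgroup $\Gamma(2)\subset SL(2,\mathbb{Z})$ of the Veech group of $P$. The first step is then to check that the $\Gamma_1$--action on $H^1(\rho)$ factors through this image and agrees there with the Gauss--Manin monodromy of the local system $R^1\pi_*\mathcal{L}_\rho$ over the Teichm\"uller curve of $P$.

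The second step identifies that Teichm\"uller curve. The $SL(2,\mathbb{R})$--orbit of the flat pillowcase is a modular curve: the four cone points of $P$ are the images of the four two--torsion points of the underlying torus, and their cross--ratio on $P\cong\mathbb{CP}^1$ is the modular $\lambda$--function, which identifies the Teichm\"uller curve with $\mathbb{H}/\Gamma(2)\cong\mathbb{CP}^1\setminus\{0,1,\infty\}$. The three punctures $\lambda=0,1,\infty$ correspond to the three ways to split $P$ into cylinders, equivalently to the three partitions of $\{x_1,\dots,x_4\}$ into pairs, equivalently to the three ways two of the cone points can be made to collide. So the representation to compute is that of $\pi_1(\mathbb{CP}^1\setminus\{0,1,\infty\})\cong F_2$ on $H^1(\mathbb{CP}^1\setminus\{x_i\};\mathcal{L}_\rho)$, the classical hypergeometric situation; note that the center $\{\pm I\}\subset\Gamma(2)$ acts on $H^1(\rho)$ by a scalar, hence trivially on $\mathbb{P}H^1(\rho)$.

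The third step is local. By the Picard--Lefschetz formula for twisted coefficients, the monodromy around the puncture at which $x_i$ and $x_j$ collide is, projectively on $\mathbb{CP}^1$, either an elliptic element of finite order $m_{ij}$ --- the order of the Picard--Lefschetz eigenvalue, an explicit root of unity built from $\rho(g_i)$ and $\rho(g_j)$ --- or a parabolic, in which case we set $m_{ij}=\infty$; the latter occurs exactly when $\mathcal{L}_\rho$ has trivial monodromy around the corresponding vanishing cycle. The three such elements multiply to the identity around a loop enclosing all three punctures, and since $F_2$ is free of rank two they generate the whole image. Hence the image of $\Gamma_1$ in $PGL(2,\mathbb{C})$ is $\langle a,b,c\mid a^{m_{12}}=b^{m_{13}}=c^{m_{14}}=abc=1\rangle$, the rotation (von Dyck) triangle group, which is precisely the orientation--preserving index--two subgroup of the corresponding reflection triangle group. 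Finally, whether this triangle group is spherical, Euclidean or hyperbolic --- i.e.\ whether $1/m_{12}+1/m_{13}+1/m_{14}$ is greater than, equal to, or less than $1$ --- is governed by the same combination of the $\mu_i$ that, by the Riemann--Roch computation recorded in Theorem 4.1, controls whether the invariant Hermitian form on $H^1(\rho)$ is definite, degenerate or indefinite; thus the geometry that form puts on $\mathbb{P}H^1(\rho)$ is exactly the one the triangle group carries. This matches case (2) of Theorem 1.2 with the Euclidean case --- there the form is degenerate with a distinguished $\Gamma_1$--fixed point, namely $\ker(r|_{H^1(\rho)})$ by Theorem 1.2(2), so $\Gamma_1$ acts by Euclidean motions of $\mathbb{C}=\mathbb{CP}^1\setminus\{\mathrm{pt}\}$ --- and case (3) with the spherical and hyperbolic cases.

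The hard part is the first two steps: identifying the topologically defined $\mathbf{Aff}(M)$--action on the isotypic summand $H^1(\rho)$ with the analytically defined Gauss--Manin / hypergeometric monodromy over $\mathbb{H}/\Gamma(2)$, and making the definition of $\Gamma_1$ in Section~5 precise enough that its image surjects onto the monodromy group --- so that the image in $PGL(2,\mathbb{C})$ is the full index--two subgroup rather than merely a finite--index subgroup of it. A secondary point is the bookkeeping in case (2): one must check that there the three local monodromies really are Euclidean rotations (or translations) whose angles sum to a multiple of $2\pi$, so that the triangle group is one of the Euclidean ones, rather than elements with a nontrivial scaling part.
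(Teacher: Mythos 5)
Your route---identifying $H^1(\rho)$ with a hypergeometric local system over the modular curve $\mathbb{H}/\Gamma(2)\cong\mathbb{CP}^1\setminus\{0,1,\infty\}$ and reading off the monodromy at the three cusps---is genuinely different from the paper's, which never leaves the cochain level: there $\gamma_1,\gamma_2$ are explicit compositions of the shearing maps $s$, their action on $H^1(\rho)$ is written out as the $2\times 2$ matrices of equations (15)--(16), and the triangle group appears by choosing fixed points $P_1,P_2,P_3$ of $\gamma_1$, $\gamma_2$, $\gamma_2^{-1}\gamma_1$ and writing each generator as a product of two reflections in the sides of $\Delta P_1P_2P_3$. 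That elementary two-reflections argument is precisely the step your proposal omits. You pass from ``the three local monodromies are elliptic of finite order or parabolic and multiply to the identity'' directly to ``the image is the von Dyck group $\langle a,b,c\mid a^{m_{12}}=b^{m_{13}}=c^{m_{14}}=abc=1\rangle$.'' That inference is the entire geometric content of the theorem, and as stated it is also not quite correct: the projective rotation angle of $\gamma_1$ is $\arg\rho(g_1g_2)$, which need not be the primitive angle $2\pi/m_{12}$, so the relevant triangle has angles $|\pi-(\arg\rho(g_i)+\arg\rho(g_j))/2|$ (as the paper records in Section 6) rather than $\pi/m_{ij}$, and the image is the even subgroup of the reflection group of \emph{that} triangle---a quotient of, not necessarily isomorphic to, the von Dyck group you name. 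Closing the gap requires the fixed-point-and-reflection argument (including the parabolic case, handled in the paper by placing the corresponding vertex on the ideal boundary), or some equivalent.

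Two further points. First, you explicitly defer ``the hard part,'' namely matching the topologically defined $\Gamma_1$-action with the analytic Gauss--Manin monodromy; the paper sidesteps this entirely because $\gamma_1^*,\gamma_2^*$ are computed directly from the cell structure, so there is nothing to match. Second, $\Gamma_1$ in the paper is by definition generated by the two specific lifts $\gamma_1,\gamma_2$ of the Dehn twists, not the full preimage of $\Gamma(2)$ in $\mathbf{Aff}$; this is what makes the image in $PGL(2,\mathbb{C})$ visibly two-generated. The surjectivity onto the full index-2 subgroup, which you flag as a worry, is settled in the paper by checking that all four products of two reflections in the sides of the triangle are realized by words in $\gamma_1^{\pm1},\gamma_2^{\pm1}$.
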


By considering the angles of the triangles, we can easily determine when $\Gamma_1$, hence the $\mathbf{Aff}$, acts discretely on $H^1(\rho)$.\\
 
As an application of these results, we will construct examples, which answer a question of Smillie and Weiss. In \cite{sw}, they use these examples to show that horocycle orbit closures in strata may be non-affine. For their examples, Smillie and Weiss require that there is an invariant subspace of $H^1(M,\Sigma;\mathbb{C})$ defined over $\mathbb{R}$. Define complex conjugation in $H^1(M,\Sigma;\mathbb{C})$ by the complex conjugate in coefficient field $\mathbb{C}$. For any subspace $N\subset H^1(M,\Sigma;\mathbb{C})$, let $\overline{N}$ be the complex conjugate of $N$. A complex subspace $N$ is defined over $\mathbb{R}$ i.e. $N=Re(N)\otimes\mathbb{C}$, if and only if $\overline{N}=N$.

\begin{prop} There is a square-tiled surface $M$ on which all points in $\Sigma$ are singular, constructed as a normal abelian branched cover of the flat pillowcase such that: 
\begin{enumerate}
\item there is a direct sum decomposition $H^1(M,\Sigma;\mathbb{C})=N\oplus\overline{N}\oplus H$ preserved by the action of the group of orientation preserving affine diffeomorphisms. Furthermore, $H$ is defined over $\mathbb{R}$.
\item there is a positive or negative definite Hermitian norm on $N$ invariant under the affine diffeomorphism group action
\item the affine diffeomorphism group action on $N$ does not factor through a discrete group.
\end{enumerate}
\end{prop}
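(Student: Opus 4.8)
\emph{Proof plan.} Let $\mathbf{Aff}^+\subset\mathbf{Aff}$ be the subgroup of orientation-preserving affine diffeomorphisms, and keep the notation $\Delta$, $H^1(\rho)$, $\Gamma_1$ of the introduction. The point is that a single nontrivial character exhibits all three phenomena, so the plan is to find a finite abelian group $G$, a generating tuple $g_1,g_2,g_3,g_4$ of $G$ with $\sum_i g_i=0$, and a character $\rho\in\Delta$ with: (a) none of $\rho(g_1),\dots,\rho(g_4)$ equals $1$, equivalently $\rho$ is in case (3) of Theorem 1.2, so $\dim_{\mathbb C}H^1(\rho)=2$; (b) $\rho\neq\overline\rho$; (c) the $\mathbf{Aff}^+$-orbit of $\rho$ in $\Delta$ is contained in $\{\rho,\overline\rho\}$; (d) every $g_i$ has order $\geq3$ in $G$, so that each preimage of a cone point, having cone angle $\pi\cdot\mathrm{ord}(g_i)\neq 2\pi$, is genuinely singular, i.e. all of $\Sigma$ is singular. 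Granting such a $\rho$, put $N:=H^1(\rho)$ and $H:=\bigoplus_{\rho'\in\Delta\setminus\{\rho,\overline\rho\}}H^1(\rho')$. The action of any diffeomorphism, and in particular of $G$, on cohomology is defined over $\mathbb Z$, hence commutes with complex conjugation on $H^1(M,\Sigma;\mathbb C)$; so conjugation carries the $\rho$-isotypic component to the $\overline\rho$-isotypic one, whence $\overline N=H^1(\overline\rho)$ and $\overline H=H$. Thus $H$ is defined over $\mathbb R$ and $H^1(M,\Sigma;\mathbb C)=N\oplus\overline N\oplus H$ by Theorem 1.1. By (c) the set $\{\rho,\overline\rho\}$ is $\mathbf{Aff}^+$-invariant, hence so is its complement in $\Delta$, and since $\mathbf{Aff}$ permutes the summands $H^1(\rho')$ (Theorem 1.1(1)), $\mathbf{Aff}^+$ preserves each of $N\oplus\overline N$ and $H$, interchanging $N$ and $\overline N$ at worst. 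This is (1).

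For (2) and (3) impose one more condition: (e) $\sum_i\{\arg\rho(g_i)/2\pi\}\in\{1,3\}$, which by Theorem 4.1 is exactly the condition for the invariant Hermitian form on $H^1(\rho)$ to be positive or negative definite. Statement (2) is then immediate: the invariant form of Section 4 restricts to a definite norm on $N$, invariant under the finite-index subgroup $\mathbf{Aff}^+_N\subseteq\mathbf{Aff}^+$ fixing $N$ (which contains $\Gamma_1\cap\mathbf{Aff}^+$). For (3), definiteness makes $\mathbb P N=\mathbb{CP}^1$ a round sphere and $\mathbf{Aff}^+_N$ act on $N$ through the compact group $U(2)$. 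By Theorem 1.3 the subgroup $\Gamma_1\cap\mathbf{Aff}^+$ acts on $\mathbb P N$ through an index-$2$ subgroup of a spherical triangle group $T$, whose three angles are the explicit rational (the $\rho(g_i)$ being roots of unity) multiples of $\pi$ given in Section 5. By Schwarz's classification of algebraic hypergeometric functions, a spherical triangle group is finite if and only if its triangle occurs on Schwarz's list, all of whose angles have denominator at most $5$; for every other spherical triangle $T$ is infinite. So add: (f) some angle of the associated triangle has denominator $\geq7$. Then $T$, hence its index-$2$ subgroup, hence the image of $\Gamma_1\cap\mathbf{Aff}^+$ in $PU(2)$, and hence that of $\mathbf{Aff}^+_N$, is infinite; being an infinite subgroup of the compact group $U(2)$ it is non-discrete, so the action of $\mathbf{Aff}^+$ on $N$ does not factor through a discrete group. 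This is (3).

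It remains to produce $G$, $g_i$, $\rho$ satisfying (a)--(f) at once, and marshalling these conditions is the real content. The $\mathbf{Aff}^+$-action on $\Delta$ is by precomposition with the automorphism $\alpha$ of $G$ that a diffeomorphism induces by conjugation, and $\alpha$ is forced to permute the multiset $\{g_1,\dots,g_4\}$ (it sends the monodromy around one branch point to the monodromy around another); so if $G=\mathbb Z/N$ and $\{g_1,\dots,g_4\}$ has trivial stabilizer in $(\mathbb Z/N)^\times$, then $\alpha=\mathrm{id}$, every character is $\mathbf{Aff}^+$-fixed, and (c) holds. One then picks $N$ divisible by a prime $p\geq7$ and residues $g_i$ so that in addition $\sum_i\{g_i/N\}\in\{1,3\}$ (this is (e)), none is $0$ (this is (a)), all are of order $\geq3$ (this is (d), automatic for $N=p$), $\bar\rho\neq\rho$ (this is (b), automatic for $N$ odd), and a denominator $p$ survives in the triangle angles of Section 5 (this is (f)). A concrete small candidate is $G=\mathbb Z/7$, $(g_1,g_2,g_3,g_4)=(1,1,2,3)$, $\rho=\rho_1\colon x\mapsto e^{2\pi i x/7}$: here $\sum_i g_i=7\equiv0$, $G$ is generated, the $\theta_i$ are $\tfrac17,\tfrac17,\tfrac27,\tfrac37$ with sum $1$ (so (a),(e)), $\overline\rho=\rho_6\neq\rho_1$ (so (b)), every $g_i$ has order $7$ (so (d)), and $\{1,1,2,3\}\subset\mathbb Z/7$ has trivial stabilizer in $(\mathbb Z/7)^\times$ since its only repeated element is $1$ (so (c)); one then checks from Section 5 that the corresponding triangle is spherical with a denominator-$7$ angle (so (f)). The only place requiring care is verifying (e) and (f) against the precise signature and angle formulas of Theorems 4.1 and 1.3; should this tuple fail such a numerical check, one enlarges $N$ (a multiple of $7$) and re-chooses generic residues $g_i$ so that the trivial-stabilizer condition, the congruence $\sum\{g_i/N\}\in\{1,3\}$, and the survival of $7$ in the triangle angles all persist. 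With $(G,g_i,\rho)$ in hand the three parts of the proposition follow formally from Theorems 1.1, 1.3, 4.1 and Schwarz's theorem as above, the main subtlety throughout being property (c), which rests on the description of the $\mathbf{Aff}$-action on the character group $\Delta$ given in the paper.
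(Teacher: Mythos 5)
Your plan is essentially the paper's own proof: decompose via Theorem 1.1, force invariance of $\{\rho,\overline\rho\}$ by making the induced automorphism trivial, get definiteness from the signature formula of Theorem 4.1, and get non-discreteness from the fact that two rotations of order $>5$ about distinct axes generate an infinite (hence non-discrete) subgroup of $SO(3)$. The paper does exactly this, only with the concrete example $G\subset(\mathbb{Z}/120)^3$, $\mathbf{g}=((20,0,0),(0,15,0),(0,0,12),(100,105,108))$, where the $g_j$ have pairwise distinct orders (so $\Gamma=\mathbf{Aff}$ by Remark 2); your trivial-multiset-stabilizer criterion is the same mechanism, and your numerical checks for $\mathbb{Z}/7$, $(1,1,2,3)$, $\rho(x)=e^{2\pi ix/7}$ do go through ($\theta_1=2\pi$ gives signature $(0,2,0)$; $\rho(g_1g_2)$ and $\rho(g_2g_3)$ both have order $7$ with distinct fixed points on $\mathbb{CP}^1$).

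One caveat about your concrete witness: with $G=\mathbb{Z}/7$ every $g_j$ has odd order, so every cone angle is $7\pi$ and $M$ is only a half-translation surface, not a translation surface. The paper is careful to choose all $g_j$ of even order precisely so that $M$ is a genuine square-tiled translation surface, which is what the Smillie--Weiss application (and the usual meaning of ``square-tiled surface'') requires. Your general scheme survives unchanged --- just add the condition that all $g_j$ have even order and take, e.g., the paper's $(\mathbb{Z}/120)^3$ example, or $G=\mathbb{Z}/8$, $\mathbf{g}=(1,1,1,5)$, $\rho(1)=e^{\pi i/4}$ (for which non-finiteness follows from the Schwarz/ADE table rather than the order-$>5$ criterion, since $\rho(g_1g_2)$ has order $4$). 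A second, minor point: ``some angle has denominator $\geq 7$'' does not by itself exclude the dihedral family on Schwarz's list, so in the spherical case you should argue as the paper does, via the orders and distinctness of the axes of $\gamma_1$ and $\gamma_2$, rather than from the angle denominators alone.
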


\begin{rem} The decomposition $H^1(M,\Sigma;\mathbb{C})=N\oplus\overline{N}\oplus H$ implies that $H^1(M,\Sigma;\mathbb{R})=Re(N)\oplus Re(H)$. Furthermore, $N$, $\overline{N}$ and $H$ are orthogonal to each other with respect to the invariant Hermitian form, and the tangent space of the $GL(2,\mathbb{R})$ orbit of $M$ lies in $H$\end{rem}

Hubert-Schmith{\"u}sen \cite{hs} gave a proof of the non-discreteness of the action of the affine group in some cases using Lyapunov exponents and Galois conjugate. Forni-Matheus-Zorich \cite{fmz}, Bouw-M\"oller \cite{bm}, Deligne-Mostow \cite{dm86}, Thurston \cite{thruston98}, Alex Wright \cite{wright2012}, McMullen \cite{mcmullen13} and Eskin-Kontsevich-Zorich \cite{ekz} calculated the action on cohomology and provided discreteness criteria under different contexts.\\

The author thanks his thesis advisor John Smillie for suggesting the problem and many helpful conversations. The author also thanks Alex Wright, Barak Weiss and Gabriela Schmith{\"u}sen for many helpful comments.\\

As pointed out by Alex Wright, the existence of examples answering the question of Smillie and Weiss follows from the following ingredients: firstly, Theorem 1.1, secondly, a signature calculation of the Hodge form on each component, and thirdly, a discreteness criteria. The discreteness criteria and signature calculation in \cite{dm86}, together with Theorem 1.1, are already enough for the construction of many such examples. We will give a description of the affine diffeomorphism group of these surfaces in Section 2, and define the group $\Gamma$. In Section 3, we will prove Theorem 1.1. In Section 4, we define and calculated the signature of an invariant Hermitian form. In Section 5 we will define $\Gamma_1$ and prove Theorem 1.2 and 1.3. In Section 6 we will discuss the discreteness criteria and construct examples that establish Proposition 1.4.\\

We will now set up some notation to describe normal branched covers of the pillowcase. Let $P$ be the unit flat pillowcase with four cone points $z_1$, $z_2$, $z_3$ and $z_4$ of cone angle $\pi$. We build $P$ by identifying edges with the same label as in Figure 1:

\begin{figure}[H]
\begin{tikzpicture}[scale=2,  ray/.style={decoration={markings,mark=at position .5 with {
      \arrow[>=latex]{>}}},postaction=decorate}
]
\draw[ray] (0,1)--(0,2) node[pos=.5,left]{$e^2$} node[pos=1,left]{$z_3$};
\draw[ray] (0,2)--(1,2) node[pos=.5,above]{$e^3$} node[pos=1,right]{$z_4$};
\draw[ray] (1,2)--(1,1) node[pos=.5,right]{$e^4$};
\draw[ray] (1,1)--(0,1) node[pos=0,right]{$z_1$} node[pos=.5,above] {$e^1$} node[pos=1,left]{$z_2$};
\draw[ray] (0,1)--(0,0) node[pos=.5,left]{$e^2$};
\draw[ray] (0,0)--(1,0) node[pos=0,left]{$z_3$} node[pos=.5,below] {$e^3$} node[pos=1,right]{$z_4$};
\draw[ray] (1,0)--(1,1) node[pos=.5,right]{$e^4$};
\node at (.5,1.5){$B^2$};
\node at (.5,.5){$B^1$};
\end{tikzpicture}
\caption{\label{p}}
\end{figure} 

Let $G$ be a finite group and $\mathbf{g}=(g_1,\dots,g_4)\in G^4$ a 4-tuple of elements in $G$ such that $g_1g_2g_3g_4=1$. Let $M=M(G,\mathbf{g})$ be the connected normal branched cover of $P$ branching at $z_1,\dots z_4$, with deck transformation group $G$ acting on the left. Let $l_j$ be a simple loop around $z_j$ that travels in counter-clockwise direction on $P$ based in $B^1$. It lifts to a path from the preimage of $B^1$ in the $g$-th sheet of the cover to the preimage of $B^1$ in the $gg_j$-th sheet. In other words, $\mathbf{g}$ gives a group homomorphism from 
$$\pi_1(P-\{z_1,z_2,z_3,z_4\})=\langle l_1,l_2,l_3,l_4|l_1l_2l_3l_4=1\rangle$$
to $G$. Here the homomorphism defined by $\mathbf{g}$ sends the element in $\pi_1(P-\{z_1,z_2,z_3,z_4\})$ represented by $l_j$ to $g_j\in G$. The connectedness of $M$ is equivalent to the condition that $\{g_1,\dots,g_4\}$ generate $G$. Let $\Sigma$ denote the set of preimages of all points $z_j$, $j=1,\dots,4$. The surface $M$ has a half translation structure induced by the half translation structure on $P$.\\

When the orders of $g_j$ are all even, all the holonomies are translations and $M$ is a translation surface. When the order of $g_j$ is 2, the corresponding vertex has cone angle $2\pi$. When none of the orders of $g_j$ is 2, $\Sigma$ consists of actual cone points of $M$, in which case $\mathbf{Aff}$ is the affine diffeomorphism group. \\

The decomposition of $P$ into two squares in figure~\ref{p} induces a cell decomposition on $M(G,\mathbf{g})$, which can be described as $|G|$-copies of pairs of squares labeled by elements in $G$ as $B^1_g$, $B^2_g$, that are glued together by identifying edges $e^j_g$ and $e^{j'}_{g'}$ when $j=j'$ and $g=g'$, so that the directions indicated by the arrows match:

\begin{figure}[H]
\begin{tikzpicture}[scale=2,  ray/.style={decoration={markings,mark=at position .5 with {
      \arrow[>=latex]{>}}},postaction=decorate}
]

\draw[ray] (0,1)--(0,2) node[pos=.5,left]{$e^2_{gg_2}$} node[pos=1,left]{$z_3$};
\draw[ray] (0,2)--(1,2) node[pos=.5,above]{$e^3_{gg_2g_3}$} node[pos=1,right]{$z_4$};
\draw[ray] (1,2)--(1,1) node[pos=.5,right]{$e^4_{gg_2g_3g_4}$};
\draw[ray] (1,1)--(0,1) node[pos=0,right]{$z_1$} node[pos=.5,above] {$e^1_g$} node[pos=1,left]{$z_2$};
\draw[ray] (0,1)--(0,0) node[pos=.5,left]{$e^2_g$};
\draw[ray] (0,0)--(1,0) node[pos=0,left]{$z_3$} node[pos=.5,below] {$e^3_g$} node[pos=1,right]{$z_4$};
\draw[ray] (1,0)--(1,1) node[pos=.5,right]{$e^4_g$};
\node at (.5,1.5){$B^2_g$};
\node at (.5,.5){$B^1_g$};

\end{tikzpicture}
\caption{\label{p2}}
\end{figure} 

For example,  in our notation the Wollmilchsau \cite{f06,hs08}
 is $M(\mathbb{Z}/4, (1,1,1,1))$, can be presented as the union of the following squares with indicated glueings :

\begin{figure}[H]
\begin{tikzpicture}[scale=1.5,  ray/.style={decoration={markings,mark=at position .5 with {
      \arrow[>=latex]{>}}},postaction=decorate}
]

\draw[ray] (0,1)--(0,2) node[pos=.5,left]{$e^2_{1}$} node[pos=1,left]{$z_3$};
\draw[ray] (0,2)--(1,2) node[pos=.5,above]{$e^3_{2}$} node[pos=1,right]{$z_4$};
\draw[ray] (1,2)--(1,1) node[pos=.5,right]{$e^4_{3}$};
\draw[ray] (1,1)--(0,1) node[pos=0,right]{$z_1$} node[pos=1,left]{$z_2$};
\draw[ray] (0,1)--(0,0) node[pos=.5,left]{$e^2_0$};
\draw[ray] (0,0)--(1,0) node[pos=0,left]{$z_3$} node[pos=.5,below] {$e^3_0$} node[pos=1,right]{$z_4$};
\draw[ray] (1,0)--(1,1) node[pos=.5,right]{$e^4_0$};
\node at (.5,1.5){$B^2_0$};
\node at (.5,.5){$B^1_0$};

\draw[ray] (2,1)--(2,2) node[pos=.5,left]{$e^2_{2}$} node[pos=1,left]{$z_3$};
\draw[ray] (2,2)--(3,2) node[pos=.5,above]{$e^3_{3}$} node[pos=1,right]{$z_4$};
\draw[ray] (3,2)--(3,1) node[pos=.5,right]{$e^4_{0}$};
\draw[ray] (3,1)--(2,1) node[pos=0,right]{$z_1$} node[pos=1,left]{$z_2$};
\draw[ray] (2,1)--(2,0) node[pos=.5,left]{$e^2_1$};
\draw[ray] (2,0)--(3,0) node[pos=0,left]{$z_3$} node[pos=.5,below] {$e^3_1$} node[pos=1,right]{$z_4$};
\draw[ray] (3,0)--(3,1) node[pos=.5,right]{$e^4_1$};
\node at (2.5,1.5){$B^2_1$};
\node at (2.5,.5){$B^1_1$};

\draw[ray] (4,1)--(4,2) node[pos=.5,left]{$e^2_{3}$} node[pos=1,left]{$z_3$};
\draw[ray] (4,2)--(5,2) node[pos=.5,above]{$e^3_{0}$} node[pos=1,right]{$z_4$};
\draw[ray] (5,2)--(5,1) node[pos=.5,right]{$e^4_{1}$};
\draw[ray] (5,1)--(4,1) node[pos=0,right]{$z_1$} node[pos=1,left]{$z_2$};
\draw[ray] (4,1)--(4,0) node[pos=.5,left]{$e^2_2$};
\draw[ray] (4,0)--(5,0) node[pos=0,left]{$z_3$} node[pos=.5,below] {$e^3_2$} node[pos=1,right]{$z_4$};
\draw[ray] (5,0)--(5,1) node[pos=.5,right]{$e^4_2$};
\node at (4.5,1.5){$B^2_2$};
\node at (4.5,.5){$B^1_2$};

\draw[ray] (6,1)--(6,2) node[pos=.5,left]{$e^2_{0}$} node[pos=1,left]{$z_3$};
\draw[ray] (6,2)--(7,2) node[pos=.5,above]{$e^3_{1}$} node[pos=1,right]{$z_4$};
\draw[ray] (7,2)--(7,1) node[pos=.5,right]{$e^4_{2}$};
\draw[ray] (7,1)--(6,1) node[pos=0,right]{$z_1$}  node[pos=1,left]{$z_2$};
\draw[ray] (6,1)--(6,0) node[pos=.5,left]{$e^2_3$};
\draw[ray] (6,0)--(7,0) node[pos=0,left]{$z_3$} node[pos=.5,below] {$e^3_3$} node[pos=1,right]{$z_4$};
\draw[ray] (7,0)--(7,1) node[pos=.5,right]{$e^4_3$};
\node at (6.5,1.5){$B^2_3$};
\node at (6.5,.5){$B^1_3$};
\end{tikzpicture}
\caption{\label{p3}}
\end{figure} 

As another example, let $G=\mathbb{Z}/3$ and $\mathbf{g}=(0,1,1,1)$. In this case $M=M(G,\mathbf{g})$ is a half translation surface and the gluing is as follows:
\begin{figure}[H]
\begin{tikzpicture}[scale=1.5,  ray/.style={decoration={markings,mark=at position .5 with {
      \arrow[>=latex]{>}}},postaction=decorate}
]

\draw[ray] (0,1)--(0,2) node[pos=.5,left]{$e^2_1$} node[pos=1,left]{$z_3$};
\draw[ray] (0,2)--(1,2) node[pos=.5,above]{$e^3_2$} node[pos=1,right]{$z_4$};
\draw[ray] (1,2)--(1,1) node[pos=.5,right]{$e^4_0$};
\draw[ray] (1,1)--(0,1) node[pos=0,right]{$z_1$} node[pos=1,left]{$z_2$};
\draw[ray] (0,1)--(0,0) node[pos=.5,left]{$e^2_0$};
\draw[ray] (0,0)--(1,0) node[pos=0,left]{$z_3$} node[pos=.5,below] {$e^3_0$} node[pos=1,right]{$z_4$};
\draw[ray] (1,0)--(1,1) node[pos=.5,right]{$e^4_0$};
\node at (.5,1.5){$B^2_0$};
\node at (.5,.5){$B^1_0$};

\draw[ray] (2,1)--(2,2) node[pos=.5,left]{$e^2_2$} node[pos=1,left]{$z_3$};
\draw[ray] (2,2)--(3,2) node[pos=.5,above]{$e^3_0$} node[pos=1,right]{$z_4$};
\draw[ray] (3,2)--(3,1) node[pos=.5,right]{$e^4_1$};
\draw[ray] (3,1)--(2,1) node[pos=0,right]{$z_1$} node[pos=1,left]{$z_2$};
\draw[ray] (2,1)--(2,0) node[pos=.5,left]{$e^2_1$};
\draw[ray] (2,0)--(3,0) node[pos=0,left]{$z_3$} node[pos=.5,below] {$e^3_1$} node[pos=1,right]{$z_4$};
\draw[ray] (3,0)--(3,1) node[pos=.5,right]{$e^4_1$};
\node at (2.5,1.5){$B^2_1$};
\node at (2.5,.5){$B^1_1$};

\draw[ray] (4,1)--(4,2) node[pos=.5,left]{$e^2_0$} node[pos=1,left]{$z_3$};
\draw[ray] (4,2)--(5,2) node[pos=.5,above]{$e^3_1$} node[pos=1,right]{$z_4$};
\draw[ray] (5,2)--(5,1) node[pos=.5,right]{$e^4_2$};
\draw[ray] (5,1)--(4,1) node[pos=0,right]{$z_1$} node[pos=1,left]{$z_2$};
\draw[ray] (4,1)--(4,0) node[pos=.5,left]{$e^2_2$};
\draw[ray] (4,0)--(5,0) node[pos=0,left]{$z_3$} node[pos=.5,below] {$e^3_2$} node[pos=1,right]{$z_4$};
\draw[ray] (5,0)--(5,1) node[pos=.5,right]{$e^4_2$};
\node at (4.5,1.5){$B^2_2$};
\node at (4.5,.5){$B^1_2$};

\end{tikzpicture}
\caption{\label{p4}}
\end{figure} 

Our notation is closely related to that used in \cite{wright2012}. In \cite{wright2012}, given a positive integer $N$ and a $n$-by-$4$ matrix $A$, one can define an abelian branched cover of $P$, denoted as $M_N(A)$. In our notation, this half-translation surface becomes $M({span}(\mathbf{q}_1,\mathbf{q}_2,\mathbf{q}_3,\mathbf{q}_4),\\(\mathbf{q}_1,\mathbf{q}_2,\mathbf{q}_3,\mathbf{q}_4))$, where $\mathbf{q}_j\in(\mathbb{Z}/N)^n$ are the column vectors of $A$.\\

Now we describe the action of the deck group $G$ on $M(G,\mathbf{g})$. An element $h\in G$ sends $B^k_g$ to $B^k_{hg}$ and $e^k_g$, to $e^k_{hg}$. The deck group action induces a right $G$-action on $H^1(M,\Sigma;\mathbb{C})$ that makes it a right $G$-module.\\

\section{Affine diffeomorphisms}

From now on we assume that $G$ is abelian, though many of our arguments work for any finite group. At the end of this section we will point out the modifications required in the non-Abelian case.\\

In this section we will calculate $\mathbf{Aff}=\mathbf{Aff}(M(G,\mathbf{g}))$, as well as the Veech group.

 Our method is inspired by the coset graph description used in \cite{sch04}. One distinction between the two approaches is that we consider the whole affine diffeomorphism group while \cite{sch04} considers the Veech group. Fixing $G$, let $V$ be the set of all 4-tuples of elements in $G$: $\mathbf{h}=(h_1,h_2,h_3,h_4)$ such that $\{h_1,h_2,h_3,h_4\}$ generates $G$ and $h_1h_2h_3h_4=1$. Each 4-tuple in $V$ is associated with a square-tiled surface $M(G,\mathbf{h})$ which is equipped with a cell decomposition labeled as in figure~\ref{p2}. An element $F$ in $\mathbf{Aff}$ induces an automorphism of the deck group $G$ by $g\mapsto FgF^{-1}$, i.e. there is a group homomorphism $\mathbf{Aff}\rightarrow {Aut}(G)$. We denote the kernel of this homomorphism as $\Gamma$. Because ${Aut}(G)$ is a finite group, $\Gamma$ is a subgroup of $\mathbf{Aff}$ with finite index. \\

We will show that all orientation preserving affine diffeomorphisms between various surfaces $M(G,\mathbf{h})$ with $G$ fixed and $\mathbf{h}$ varying in $V$, that preserves $\Sigma$ are compositions of a finite set of affine diffeomorphisms. We call this set the set of basic affine diffeomorphisms, and we will describe them below. In our discussion we will be dealing with both translation surfaces and half translation surface surfaces. It will be convenient to view the derivative of an affine diffeomorphism as an element of $PGL(2,\mathbb{R})=GL(2,\mathbb{R})/\{\pm I\}$. We will call an affine translation diffeomorphism a half translation equivalence when its derivative is $1$ in $PGL(2,\mathbb{G})$. \\

Now we define four of the five classes of the basic affine diffeomorphisms:
\begin{enumerate}[(i)]
\item Rotation: For any $(h_1,h_2,h_3,h_4)\in V$, let $t_{(h_1,h_2,h_3,h_4)}$ be the map from  $M(G,(h_2,h_3,h_4,h_1))$ to $M(G,(h_1,h_2,h_3,h_4))$ that sends $B^1_e$ of $M(G,(h_2,h_3,h_4,h_1))$ to $B^1_e$ of $M(G,(h_1,h_2,h_3,h_4))$ by rotating counterclockwise by $\pi/2$. 
\item Deck transformation: For any $(h_1,h_2,h_3,h_4)\in V$, $g\in G$, let $r_{g,\mathbf{h}}$ be the deck transformation $g$ in $M(G,\mathbf{h})$. Its derivative is the identity.
\item Interchange of $B^1$ and $B^2$: For any $(h_1,h_2,h_3,h_4)\in V$, let $f_{(h_1,h_2,h_3,h_4)}$ be the map from $M(G, (h_2,h_1,h_1^{-1}h_4h_1, h_2h_3h_2^{-1}))$ to $M(G,(h_1,h_2,h_3,h_4))$ which interchanges $B^1_g$ and $B^2_g$ by a rotation of $\pi$.
\item Relabeling: For any $(h_1,h_2,h_3,h_4)\in V$, $\psi\in{Aut}(G)$, let $m_\psi$ be the map from $M(G,\mathbf{h})$ to $M(G,\psi(\mathbf{h}))$ that sends $B^j_g$ to $B^j_{\psi(g)}$. Its derivative is the identity.
\end{enumerate}

We claim that:
\begin{lem}
Any half translation equivalence from $M(G,\mathbf{h})$ to $M(G,\mathbf{h'})$ can be written as composition of basic affine diffeomorphisms $t^2$, $r$, $f$ and $m$.
\end{lem}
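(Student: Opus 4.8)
The plan is to first show that any such half-translation equivalence $\Psi\colon M(G,\mathbf{h})\to M(G,\mathbf{h'})$ (taken, as in the paragraph before the lemma, to preserve $\Sigma$) carries the square tiling of the source onto that of the target, and then to normalize $\Psi$ by composing it with basic diffeomorphisms of types $t^2$, $f$, $r$ until only a relabeling $m$ remains. For cellularity: a half-translation equivalence has derivative $\pm I$, so it preserves the flat metric and the horizontal and vertical directions. By construction the square tiling of $M(G,\mathbf{h})$ is the pullback of the two-square tiling of $P$, so its vertex set is exactly $\Sigma$, and likewise for $M(G,\mathbf{h'})$; hence $\Psi$ takes vertices to vertices. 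An edge is a unit-length horizontal or vertical segment joining two vertices with no vertex in its interior, and $\Psi$ preserves all three properties, so it takes edges to edges; then the interior of a square maps into a connected open subset of the target disjoint from the $1$-skeleton, hence into a single open square, which it fills by invariance of domain together with area preservation. Thus $\Psi$ carries each $B^k_g$ onto some $B^{k'}_{g'}$, by the identity of the unit square if its derivative is $+I$ and by a rotation through $\pi$ if it is $-I$.

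Next I would normalize. Write $\Psi(B^1_e)=B^{k'}_{g'}$; post-composing with an appropriate deck transformation $r$ removes the lower index, post-composing with the inverse of a suitable interchange map $f$ (needed only if $k'=2$) makes the upper index $1$, and post-composing with a rotation-by-$\pi$ map of type $t^2$ (needed only if the derivative is $-I$) makes the derivative $+I$. Since each of $t^2$, $f$, $r$ has an inverse of its own type, these steps replace $\Psi$ by $\Psi_0=C\circ\Psi$ with $C$ a composition of basic diffeomorphisms of types $t^2$, $f$, $r$, where $\Psi_0\colon M(G,\mathbf{h})\to M(G,\mathbf{h''})$ is cellular, has derivative $+I$ everywhere, and restricts to the identity on $B^1_e$.

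Finally I would identify $\Psi_0$ with a relabeling. Every square of $M(G,\mathbf{h})$ is $B^k_{w(\mathbf{h})}$ for some word $w$ in $h_1,\dots,h_4$, and the rules by which the labeled squares $B^k_g$ and edges $e^k_g$ are glued are formally identical for $M(G,\mathbf{h})$ and $M(G,\mathbf{h''})$, with $\mathbf{h''}$ playing the role of $\mathbf{h}$. Propagating outward from $B^1_e$ across shared edges — using that $\Psi_0$ is cellular, fixes $B^1_e$, and has derivative $+I$ on every square, so the orientation-reversing identifications hidden in the pillowcase's gluings cannot rotate $\Psi_0$ on any square — one sees that $\Psi_0$ sends $B^k_{w(\mathbf{h})}$ to $B^k_{w(\mathbf{h''})}$, acting throughout as the identity of the unit square. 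Since $\Psi_0$ is a homeomorphism this assignment is independent of the word $w$, so it defines a map $\psi\colon G\to G$, $w(\mathbf{h})\mapsto w(\mathbf{h''})$; the same identity shows $\psi$ is a homomorphism, and it is bijective because $\{h''_1,\dots,h''_4\}$ generates the finite group $G$. Hence $\psi\in\mathrm{Aut}(G)$, $\mathbf{h''}=\psi(\mathbf{h})$, and $\Psi_0=m_\psi$, so $\Psi=C^{-1}\circ m_\psi$ is a composition of basic affine diffeomorphisms of types $t^2$, $f$, $r$ and $m$.

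I expect the last step to be the main obstacle: one must make rigorous that a cellular, derivative-$+I$ homeomorphism fixing $B^1_e$ must be the obvious relabeling. Consistency of the propagation is automatic from $\Psi_0$ being a single well-defined homeomorphism, but some care is needed to check that the $\pi$-rotations implicit in the pillowcase's edge identifications introduce no rotation into $\Psi_0$ on any square — and this is precisely where the hypothesis that the derivative is $+I$, rather than merely $\pm I$, is used. The cellularity and normalization steps are routine once this flat-geometric picture is in place.
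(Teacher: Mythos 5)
Your proof is correct and follows essentially the same approach as the paper's: the paper likewise observes that a half-translation equivalence permutes the unit squares and is therefore determined by the image of $B^1_e$, the rotation on it, and the induced relabeling of $G$, writing $F_0=m_{\psi}t^{2s}f^{j-1}r_g$. Your version simply spells out the cellularity, the normalization by $r$, $f$, $t^2$, and the propagation argument that the paper leaves implicit.
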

\begin{proof}
Let $F_0$ be such a half translation equivalence. Denote the unit element of $G$ as $e$. By our assumption, $F_0$ preserves $\Sigma$, hence it is a permutation of unit squares that tiled $M$ and $M'$. More precisely, $F_0$ is completely determined by the following data: i) the induced automorphism $\psi$ of deck group, ii) a number $j=1$ or $2$, which is $1$ when $B^1$ and $B^2$ are taken to themselves and $2$ when they are interchanged, an element $g\in G$, such that $F_0(B^1_e)=B^j_g$, and iii) a number $s$, which is $0$ if $F_0^{-1}(e^1_g)$ is $e^1_e$, $1$ if $F_0^{-1}(e^1_g)$ is $e^3_e$. Hence, $F_0=m_{\psi}t^{2s}f^{j-1}r_g$.\end{proof}

For general orientation preserving affine diffeomorphism $F$, its derivative $DF$ will be in $PSL(2,\mathbb{Z})$. We add another class of basic affine diffeomorphisms:\\
\begin{enumerate}[(v)]
\item Shearing: For any $(h_1,h_2,h_3,h_4)\in V$, let $s_{(h_1,h_2,h_3,h_4)}$ be a map from $M(G,(h_1h_2h_1^{-1},h_1,h_3,h_4))$ to
$M(G,(h_1,h_2,h_3,h_4))$ that sends $e^3_1$ to $e^3_1$ and has derivative $\left( \begin{array}{cc}
1 & -1\\
0 &  1 \end{array} \right)$.
\end{enumerate}

Because the derivative of $s$ and $t$ generate $PSL(2,\mathbb{Z})$, by successively composing with $s$ and $t$ we can reduce to the case when the derivative is the identity. Hence given any $\mathbf{h}, \mathbf{h'}\in V$, any affine diffeomorphism from $M(G,\mathbf{h})$ to $M(G,\mathbf{h'})$ that sends $\Sigma$ to $\Sigma$, or more specifically, any element in $\mathbf{Aff}$, is a composition of the five classes of maps described above. Because $m$ commutes with other 4 classes of diffeomorphisms, i.e. 
$$m_\psi t_{\mathbf{h}}=t_{\psi(\mathbf{h})} m_\psi$$
$$m_\psi r_{g,\mathbf{h}}=r_{\psi(g),\psi(\mathbf{h})} m_\psi$$
$$m_\psi f_{\mathbf{h}}=f_{\psi(\mathbf{h})} m_\psi$$ 
$$m_\psi s_{\mathbf{h}}=s_{\psi(\mathbf{h})} m_\psi$$
any $F\in\mathbf{Aff}$ can be written as $F=F_1m_\psi$ where $F_1$ is a composition of $t$, $s$, $r$ and $f$, while $\psi$ is the automorphism of deck group induced by $F$. Hence, elements in $\Gamma$ can be written as successive compositions of $t$, $r$, $f$ and $s$.\\

As in \cite{sch04}, we will define a directed graph $D$ with vertex set $V$. Each element $\mathbf{h}\in V$ corresponding to a surface $M(G,\mathbf{h})$. An edge in the graph corresponds to a basic affine diffeomorphism between two $M(G,\mathbf{h})$. Paths starting starting and ending at $M(G,\mathbf{g})$ correspond to elements in $\mathbf{Aff}$. Now the fact that any affine diffeomorphism is a successive composition of $t$, $s$, $r$, $f$, and $m$ means that the map from the set of such paths to $\mathbf{Aff}$ is surjective. Similarly, let $D_0$ be graph $D$ with those edges corresponding to $m$ removed, then the set of paths starting and ending at $\mathbf{g}$ in $D_0$ maps surjectively to $\Gamma$.\\ 

Consider the example for which $G=\mathbb{Z}/6$ and $\mathbf{g}=(1,1,1,3)$. This example is the Ornithorynque (c.f.\cite{fm08}).
 In the following figure we give the connected component of $D$ that contains $\mathbf{g}$, with loops corresponding to deck transformation (i.e. all the $r$ arrows) omitted: 
 
\begin{figure}[H]
\begin{tikzpicture}[scale=2.7]
\node at (1,2){$(1,1,1,3)$};
\node at (2,2){$(1,1,3,1)$};
\node at (3,2){$(1,3,1,1)$};
\node at (4,2){$(3,1,1,1)$};
\node at (1,1){$(5,5,5,3)$};
\node at (2,1){$(5,5,3,5)$};
\node at (3,1){$(5,3,5,5)$};
\node at (4,1){$(3,5,5,5)$};
\draw[->](1,1.9)--(1,1.1) node[pos=.5,left]{$m$};
\draw[->](2,1.9)--(2,1.1) node[pos=.5,left]{$m$};
\draw[->](3,1.9)--(3,1.1) node[pos=.5,left]{$m$};
\draw[->](4,1.9)--(4,1.1) node[pos=.5,left]{$m$};
\draw[->](1.1,1.1)--(1.1,1.9) node[pos=.5,right]{$m$};
\draw[->](2.1,1.1)--(2.1,1.9) node[pos=.5,right]{$m$};
\draw[->](3.1,1.1)--(3.1,1.9) node[pos=.5,right]{$m$};
\draw[->](4.1,1.1)--(4.1,1.9) node[pos=.5,right]{$m$};

\draw[->](1.7,1)--(1.3,1)node[pos=.5,above]{$t$};
\draw[->](2.7,1)--(2.3,1)node[pos=.5,above]{$t$};
\draw[->](3.7,1)--(3.3,1)node[pos=.5,above]{$t$};
\draw[-](.7,1)--(.5,1);
\draw[-](.5,1)--(.5,0);
\draw[-](.5,0)--(4.5,0)node[pos=.5,below]{$t$};
\draw[-](4.5,0)--(4.5,1);
\draw[->](4.5,1)--(4.3,1);
\draw[->](1.7,2)--(1.3,2)node[pos=.5,above]{$t$};
\draw[->](2.7,2)--(2.3,2)node[pos=.5,above]{$t$};
\draw[->](3.7,2)--(3.3,2)node[pos=.5,above]{$t$};
\draw[-](.7,2)--(.5,2);
\draw[-](.5,2)--(.5,3);
\draw[-](.5,3)--(4.5,3)node[pos=.5,above]{$t$};
\draw[-](4.5,3)--(4.5,2);
\draw[->](4.5,2)--(4.3,2);

\draw[-](.8,.9)--(.8,.6);
\draw[-](.8,.6)--(1.05,.6)node[pos=.5,below]{$s$};
\draw[->](1.05,.6)--(1.05,.9);
\draw[-](1.95,.9)--(1.95,.6);
\draw[-](1.95,.6)--(2.2,.6)node[pos=.5,below]{$s$};
\draw[->](2.2,.6)--(2.2,.9);

\draw[-](3.1,.9)--(3.1,.6);
\draw[-](3.1,.6)--(3.9,.6)node[pos=.5,above]{$s$};
\draw[->](3.9,.6)--(3.9,.9);
\draw[-](3.95,.9)--(3.95,.55);
\draw[-](3.95,.55)--(3.05,.55)node[pos=.6,below]{$s$};
\draw[->](3.05,.55)--(3.05,.9);
\draw[-](3.1,2.1)--(3.1,2.4);
\draw[-](3.1,2.4)--(3.9,2.4)node[pos=.5,below]{$s$};
\draw[->](3.9,2.4)--(3.9,2.1);
\draw[-](3.95,2.1)--(3.95,2.45);
\draw[-](3.95,2.45)--(3.05,2.45)node[pos=.6,above]{$s$};
\draw[->](3.05,2.45)--(3.05,2.1);

\draw[-](.8,2.1)--(.8,2.4);
\draw[-](.8,2.4)--(1.05,2.4)node[pos=.5,above]{$s$};
\draw[->](1.05,2.4)--(1.05,2.1);
\draw[-](1.95,2.1)--(1.95,2.4);
\draw[-](1.95,2.4)--(2.2,2.4)node[pos=.5,above]{$s$};
\draw[->](2.2,2.4)--(2.2,2.1);

\draw[-](2.9,.9)--(2.9,.25);
\draw[-](2.9,.25)--(4.1,.25)node[pos=.5,below]{$f$};
\draw[->](4.1,.25)--(4.1,.9);
\draw[-](4.05,.9)--(4.05,.3);
\draw[-](4.05,.3)--(2.95,.3)node[pos=.4,above]{$f$};
\draw[->](2.95,.3)--(2.95,.9);
\draw[-](2.9,2.1)--(2.9,2.75);
\draw[-](2.9,2.75)--(4.1,2.75)node[pos=.5,above]{$f$};
\draw[->](4.1,2.75)--(4.1,2.1);
\draw[-](4.05,2.1)--(4.05,2.7);
\draw[-](4.05,2.7)--(2.95,2.7)node[pos=.4,below]{$f$};
\draw[->](2.95,2.7)--(2.95,2.1);

\draw[-](1.1,.9)--(1.1,.5);
\draw[-](1.1,.5)--(1.9,.5)node[pos=.5,below]{$f$};
\draw[->](1.9,.5)--(1.9,.9);
\draw[-](1.85,.9)--(1.85,.55);
\draw[-](1.85,.55)--(1.15,.55)node[pos=.5,above]{$f$};
\draw[->](1.15,.55)--(1.15,.9);
\draw[-](1.1,2.1)--(1.1,2.5);
\draw[-](1.1,2.5)--(1.9,2.5)node[pos=.5,above]{$f$};
\draw[->](1.9,2.5)--(1.9,2.1);
\draw[-](1.85,2.1)--(1.85,2.45);
\draw[-](1.85,2.45)--(1.15,2.45)node[pos=.5,below]{$f$};
\draw[->](1.15,2.45)--(1.15,2.1);

\end{tikzpicture}
\caption{\label{pn}}
\end{figure} 

$\mathbf{Aff}$ of the Ornithorynque can be calculated by finding loops in $D$ that start and end at $\mathbf{g}=(1,1,1,3)$. In this case, $\mathbf{Aff}$ is generated by $tf$, $s$, and $r_{2,(1,1,1,3)}$.\\

The Veech group is the subgroup of $SL(2,\mathbb{R})$ generated by the derivative of these generators.\\

We will show in the next section that the subspaces $H^1(\rho)$ in Theorem 1.1 are invariant under $\Gamma$.\\

This method of calculating $\mathbf{Aff}$ does not use the fact that $G$ is abelian in any way. When $G$ is non-abelian, we can define $\Gamma'$ as the set of elements in $\mathbf{Aff}(M)$ that induce an inner automorphism on $G$, i.e. the set of elements in $\mathbf{Aff}(M)$ that can be written as compositions of $t$, $s$, $r$, $f$ as well as $m_\psi$ where $\psi$ is an element of an inner automorphism of $G$. The argument in the next section will show that $H^1(\rho)$ are invariant under $\Gamma'$.

\section{Invariant decomposition of relative cohomology}

We will now prove Theorem 1.1. Furthermore, we will show that $H^1(\rho)$ are invariant under $\Gamma$ described in the previous section. The proof below can also be viewed as calculation of cohomology with twisted coefficients of $P$, c.f. \cite{hatcher2002algebraic}\\

\begin{proof}[Proof of Theorem 1.1]

Let $M=M(G,\mathbf{g})$, $\Sigma\subset M$ as in Section 1, consider the relative cellular cochain complex 
$$0\rightarrow C^1(M,\Sigma;\mathbb{C})\rightarrow C^2(M,\Sigma;\mathbb{C})$$
 We can identify $C^1(M,\Sigma;\mathbb{C})$ with $(\mathbb{C}[G])^4$ as a right-$G$ module by writing $m\in C^1(M,\Sigma;\mathbb{C})$ as 
 $$(\sum_g m(e^1_g)g^{-1}, \sum_g m(e^2_g)g^{-1},\sum_g m(e^3_g)g^{-1},\sum_g m(e^4_g)g^{-1})\in(\mathbb{C}[G])^4$$
We can also identify $C^2(M,\Sigma;\mathbb{C})$ with $(\mathbb{C}[G])^2$ as a right-$G$ module by writing $n\in C^2(M,\Sigma;\mathbb{C})$ as 
 $$(\sum_g n(B^1_g)g^{-1}, \sum_g n(B^2_g)g^{-1})\in(\mathbb{C}[G])^2$$ 
 Then, the coboundary map from $C^1$ to $C^2$ becomes 
\begin{equation}
d^1(a,b,c,d)=(a+b+c+d,a+g_2b+g_2g_3c+g_2g_3g_4d)
\end{equation}
Hence:
\begin{equation}
H^1(M,\Sigma;\mathbb{C})=\{(a,b,c,d)\in (\mathbb{C}[G])^4:a+b+c+d=a+g_2b+g_2g_3c+g_2g_3g_4d=0\}
\end{equation}
Because $\mathbb{C}[G]$ is semisimple \cite{ser}, it splits into simple algebras $\mathbb{C}[G]=\bigoplus_{\rho\in\Delta} D_\rho$, where $D_\rho$ is the simple subalgebras of $\mathbb{C}[G]$ corresponding to irreducible representation $\rho$. The splitting of the algebra gives a splitting of the cochain complex of $\mathbb{C}[G]$-modules $0\rightarrow C^1\rightarrow C^2$, hence a splitting of the cohomology:
\begin{equation}
H^1(M,\Sigma;\mathbb{C})=\bigoplus_{\rho\in\Delta} H^1(\rho),\;  H^1(\rho)=\{(a,b,c,d)\in D_\rho^4:d^1(a,b,c,d)=0\}
\end{equation}

Let $L$ be the right ideal generated by $\{g_2-1,g_2g_3-1,g_2g_3g_4-1\}$.
The image of $d^1$ in $C^2(M,\Sigma;\mathbb{C})$ is 
$$(1,1)\mathbb{C}[G]\oplus(0,1)L\subset (\mathbb{C}[G])^2=C^2(M,\Sigma;\mathbb{C})$$
This is because
\begin{align*} 
d^1(a,b,c,d)&=(a+b+c+d,a+g_2b+g_2g_3c+g_2g_3g_4d)\\
&=(a+b+c+d,a+b+c+d)+(0,(g_2-1)b)+(0,(g_2g_3-1)c)+(0,(g_2g_3g_4-1)d)\\
&=(a+b+c+d)(1,1)+((g_2-1)b+(g_2g_3-1)c+(g_2g_3g_4-1)d)(0,1)
\end{align*}

Now we show that $\mathbb{C}[G]=\mathbb{C}\oplus L$. Because $(1-a)+(1-b)a=1-ba$, if $g$ is a product of elements in $\{g_2,g_2g_3,g_2g_3g_4\}$ then $1-g\in L$. Also, because $M$ is connected, $\{g_2,g_2g_3,g_2g_3g_4\}$ generates $G$, hence $L$ is generated by all elements of the form $1-g$ for any $g\in G$, therefore $\mathbb{C}[G]=\mathbb{C}\oplus L$, where $\mathbb{C}$ is the trivial sub-algebra generated by $\sum_{g\in G}g$ \cite{ser}. Because $\mathbb{C}[G]$ is semisimple, 
$$H^1(M,\Sigma;\mathbb{C})=\ker(d^1)\rightarrow C^1(M,\Sigma;\mathbb{C})\rightarrow im(d^1)$$
splits, hence we have 
\begin{equation}
H^1(M,\Sigma;\mathbb{C})=(\mathbb{C}[G])^4/(\mathbb{C}[G]\oplus L)=(\mathbb{C}[G])^2\oplus\mathbb{C}
\end{equation}
Therefore, as $G$-module $H^1(\rho)\cong\mathbb{C}^3$ when $\rho$ is the trivial representation, $H^1(\rho)\cong D_\rho^2$ if otherwise. Because $G$ is abelian, $\dim_\mathbb{C} D_\rho=1$, so $\dim_\mathbb{C}H^1(\rho)=3$, if $\rho$ is trivial, $\dim_\mathbb{C}H^1(\rho)=2$, otherwise.\\

In the previous section we described elements in $\mathbf{Aff}$ as compositions of elementary affine diffeomorphisms $t_\mathbf{h}$, $s_\mathbf{h}$, $r_{g,\mathbf{h}}$, $f_{\mathbf{h}}$ and $m_\psi$, and elements in $\Gamma$ as compositions of elementary affine diffeomorphisms $t_\mathbf{h}$, $s_\mathbf{h}$, $r_{g,\mathbf{h}}$ and $f_{\mathbf{h}}$. We will show the invariance of $H^1(\rho)$ under $\Gamma$ by explicitly describing the action of elementary affine diffeomorphisms. The maps from $H^1(M(G,\mathbf{h}),\Sigma;\mathbb{C})$ to some $H^1(M(G,\mathbf{h}),\Sigma;\mathbb{C})$ induced by $t_\mathbf{h}$, $s_\mathbf{h}$, $r_{g,\mathbf{h}}$, $f_{\mathbf{h}}$ 
 are as follows: 
\begin{equation}
t_\mathbf{h}^*([a,b,c,d])=[b,c,d,a]
\end{equation}
\begin{equation}
s_\mathbf{h}^*([a,b,c,d])=[-h_1a,a+b,c,d+h_1a]
\end{equation}
\begin{equation}
r_{g,\mathbf{h}}^*([a,b,c,d])=[ag,bg,cg,dg]
\end{equation}
\begin{equation}
f_\mathbf{h}^*([a,b,c,d])=[-a,-h_2h_3h_4d,-h_2h_3c,-h_2b]
\end{equation}

From equation (3) we know that they all preserve decomposition $H^1(*,\Sigma;\mathbb{C})=\bigoplus_\rho H^1(\rho)$, hence all summands $H^1(\rho)$ are invariant under $\Gamma$.\\

Furthermore, $m_\psi$ is a diffeomorphism from $M(G,\psi^{-1}\mathbf{h})$ to $M(G,\mathbf{h})$, and the map it induced from $H^1(M(G,\mathbf{h}),\Sigma;\mathbb{C})$ to $H^1(M(G,\psi(\mathbf{h})),\Sigma;\mathbb{C})$ is
\begin{align*}
m_\psi^*([\sum_{g\in G} &a_gg,\sum_{g\in G} b_gg,\sum_{g\in G} c_gg,\sum_{g\in G}d_gg])\\
&=[\sum_{g\in G} a_g\psi^{-1}(g),\sum_{g\in G} b_g\psi^{-1}(g),\sum_{g\in G} c_g\psi^{-1}(g),\sum_{g\in G}d_g\psi^{-1}(g)]
\end{align*}
 which, according to equation (3), would send $H^1(\rho)$ to $H^1(\psi^{-1}\rho)$. In other words, elements in $\mathbf{Aff}$ permute $H^1(\rho)$.\\
\end{proof}

\begin{rem} In certain situations $\Gamma=\mathbf{Aff}$. This happens when the $g_j$ are all of different order, or when $G$ is $\mathbb{Z}/n$, $n\geq 4$ and $\mathbf{g}=(1,1,1,n-3)$. In these cases $H^1(\rho)$ are all invariant under $\mathbf{Aff}$. Our argument here is similar to, but not completely the same as those used in \cite{my09}. \end{rem}

\section{The signature of the Hodge form}

Now we define and calculate the signature of an invariant Hermitian form on $H^1(\rho)$ as in \cite{thruston98} and \cite{dm86}. \\

The Hodge form $A_G$ on $H^1(M,\Sigma;\mathbb{C})$ is defined as $1\over 2i$ of the cup product with coefficient pairing $\mathbb{C}\otimes\mathbb{C}\rightarrow \mathbb{C}:z\otimes z'\mapsto z\overline{z'}$ on $(M,\Sigma)$ 
$$H^1(M,\Sigma;\mathbb{C})\times H^1(M,\Sigma;\mathbb{C})\xrightarrow{\smile} H^2(M,\Sigma;\mathbb{C}\otimes\mathbb{C})\rightarrow H^2(M,\Sigma;\mathbb{C})=\mathbb{C}$$
In other words, 
\begin{equation}
\begin{split}
A_G([a,b,c,d],[a',b',c',d'])={1\over 4i}((b,a')_G-(a,b')_G+(d,c')_G-(c, d')_G\\
-(h_2b,a')_G+(a,h_2b')_G-(h_2h_3h_4d,h_2h_3c')_G+(h_2h_3c,h_2h_3h_4d')_G)
\end{split}
\end{equation}
\noindent where  $(\cdot,\cdot)_G$ is the positive definite Hermitian norm on $\mathbb{C}[G]$ defined as 
$$(\sum_ga_gg^{-1},\sum_gb_gg^{-1})_G=\sum_ga_g\overline{b_g}$$
Alternatively, if elements in $H^1(M,\Sigma;\mathbb{C})$ are represented by closed differential forms, $A_G$ can be written as $A_G(\alpha,\beta)={1\over 2i}\int_M\alpha\wedge\overline{\beta}$.\\

$A_G$ is called the area form in \cite{thruston98}, because when $\omega$ defines a flat structure on $M$, the signed area of this flat structure is $A_G(\omega,\overline{\omega})$. Unlike the Hodge norm defined by Eskin-Mirzakhani-Mohammadi, $A_G$ vanishes on the $rel$ subspace.\\

By definition $A_G$ is invariant under the $\Gamma$-action. Furthermore, from (9) and the fact that different $D_\rho$ are orthogonal under $(\cdot,\cdot)_G$, we know that $H^1(\rho)$ for different representation $\rho$ are orthogonal to each other under $A_G$. When $\rho$ is the trivial representation, $A_G=0$ on $H^1(\rho)$.\\

Now we assume $\rho$ to be a non-trivial representation. Because we will deal with Hermitian forms that may be degenerate, we denote the signature of a Hermitian form as $(n_0,n_+,n_-)$, where $n_0$, $n_+$, $n_-$ are the number of 0, positive and negative eigenvalues respectively.\\

We will prove the following theorem:

\begin{thm} When $\rho\neq 1$, the signature of the area form $A_G$ on $H^1(\rho)$, is $(n_0,{\theta_2\over 2\pi}-1,{\theta_1\over 2\pi}-1)=(4-{\theta_1+\theta_2\over 2\pi},{\theta_2\over 2\pi}-1,{\theta_1\over 2\pi}-1)$, where $\theta_1=\sum_{j=1}^4 arg(\rho(g_j))$, $\theta_2=\sum_{j=1}^4 arg(\rho(g_j)^{-1})$. The number $n_0$ is also the number of indices $j$ such that $\rho(g_j)=1$.
\end{thm}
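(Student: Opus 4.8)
The plan is to diagonalize $A_G$ on $H^1(\rho)$ by choosing an explicit basis adapted to the description of $H^1(\rho)$ coming from equation (3), and then to recognize the resulting Hermitian form as one whose signature is controlled by the arguments $\arg(\rho(g_j))$. Since $G$ is abelian and $\rho$ is one-dimensional, $D_\rho\cong\mathbb{C}$ and each of $a,b,c,d$ becomes a scalar; write $\lambda_j=\rho(g_j)$, so $\lambda_1\lambda_2\lambda_3\lambda_4=1$ and $|\lambda_j|=1$. The space $H^1(\rho)$ is the set of $(a,b,c,d)\in\mathbb{C}^4$ with $a+b+c+d=0$ and $a+\lambda_2 b+\lambda_2\lambda_3 c+\lambda_2\lambda_3\lambda_4 d=0$; by Theorem 1.1 this is $2$-dimensional whenever $\rho\neq 1$ (and the two linear equations are independent precisely then). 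The first task is to rewrite $A_G$ restricted to this subspace: substituting $D_\rho$-scalars into (9) and using $(x,y)_G=x\bar y$ on the one-dimensional component, $A_G$ becomes an explicit Hermitian form on $\mathbb{C}^4$ with entries built from $1$, $\lambda_2$, $\lambda_2\lambda_3$, $\lambda_2\lambda_3\lambda_4$ and their conjugates, which we then restrict to the $2$-plane cut out by the two linear relations.

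Next I would diagonalize. A convenient route is to parametrize $H^1(\rho)$ by a single $2$-dimensional coordinate — for instance solve the two relations for $c,d$ in terms of $a,b$ — and compute the resulting $2\times2$ Hermitian matrix $Q$; its determinant and trace then give the signature. I expect $\det Q$ to factor, after simplification using $\lambda_1\lambda_2\lambda_3\lambda_4=1$ and $|\lambda_j|=1$, into something proportional to a product of quantities like $(1-\lambda_j)(1-\bar\lambda_j)=|1-\lambda_j|^2\geq 0$ together with sign factors that encode whether $\theta_1+\theta_2$ deviates from $8\pi$ in one direction or the other. Equivalently, one can use the alternative description of $H^1(\rho)$ as twisted cohomology $H^1(P;\mathcal{L}_\rho)$ with $\mathcal{L}_\rho$ the rank-one local system on $P\setminus\{z_1,\dots,z_4\}$ with monodromy $\lambda_j$ around $z_j$, and invoke the Deligne–Mostow/Esnault–Viehweg computation: the parabolic degree / Hodge filtration on such a local system has $(1,0)$-part of dimension $\theta_1/2\pi - 1$ and $(0,1)$-part of dimension $\theta_2/2\pi - 1$ (with the convention that $\arg$ takes values in $(0,2\pi)$ when $\lambda_j\neq 1$, and $\theta_i$ counts only the ramified points), and the Hodge form has signature $(\dim H^{1,0},\dim H^{0,1})$ on the nondegenerate part. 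Either way, the count $\theta_1/2\pi-1$ plus $\theta_2/2\pi-1$ plus $n_0$ must equal $\dim H^1(\rho)=2$ when no $\lambda_j=1$, which forces $n_0=4-(\theta_1+\theta_2)/2\pi$; and one checks directly that each $j$ with $\lambda_j=1$ drops the rank by exactly one, matching the claimed formula for $n_0$.

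The identification of $n_0$ with $\#\{j:\rho(g_j)=1\}$ is the step I would treat most carefully. When $\lambda_j=1$ for some $j$, the corresponding cone point of the pillowcase is unramified in the $\rho$-isotypic piece, and the local system $\mathcal{L}_\rho$ extends across $z_j$; the degeneration of $A_G$ then comes from the relative (i.e. $\mathit{rel}$) classes concentrated near the "missing" singular points, consistent with the remark that $A_G$ vanishes on $\mathit{rel}$. Concretely, with $\lambda_j=1$ one sees a null vector of $Q$ by exhibiting an explicit $(a,b,c,d)\in H^1(\rho)$ in the kernel of all the pairings in (9), using that one of the $\lambda$-factors collapses; doing this for each such $j$ and checking the resulting null vectors are independent gives $n_0\geq\#\{j:\lambda_j=1\}$, while the signature count above gives the reverse inequality.

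The main obstacle will be bookkeeping: correctly tracking the phase conventions for $\arg$, matching the two linear relations in (3) to the monodromy data so that the "$-1$" shifts land on the right summand, and verifying the rank drop is exactly one (not two) for each trivial $\lambda_j$, since a naive count of vanishing pairings in (9) could overshoot. The cleanest safeguard is to do the honest $2\times2$ (or $4\times4$-restricted-to-$2$) linear algebra once in full generality and then specialize, rather than arguing case by case, and to cross-check against the known Wollmilchsau and Ornithorynque values where the whole computation is already recorded in \cite{my09}.
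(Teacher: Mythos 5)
Your overall strategy---restrict the explicit Hermitian form (9) to the explicit $2$-plane $H^1(\rho)$ and read off the signature from a $2\times 2$ Gram matrix---is viable and close in spirit to what the paper does, but the decisive step is left as an expectation rather than carried out, and the heuristic you give for it is incorrect. You write that $\det Q$ should factor into terms $|1-\lambda_j|^2$ together with ``sign factors that encode whether $\theta_1+\theta_2$ deviates from $8\pi$ in one direction or the other.'' With $\arg$ valued in $[0,2\pi)$ one has $\theta_1+\theta_2=2\pi\cdot\#\{j:\lambda_j\neq 1\}\leq 8\pi$, so this quantity cannot deviate in two directions; it only detects degeneracy, i.e.\ $n_0$. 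What governs the definite/indefinite dichotomy in the nondegenerate case is the value of $\theta_1$ alone, which is $2\pi$, $4\pi$ or $6\pi$, giving signatures $(0,2,0)$, $(0,1,1)$, $(0,0,2)$ respectively; that the sign flips exactly when this angle sum crosses a multiple of $2\pi$ is the entire content of the theorem, and your sketch does not establish it. Your fallback of citing Deligne--Mostow is legitimate (the paper itself attributes the statement to \cite{dm}), but then the convention-matching you defer as ``bookkeeping'' is the whole proof: you must transport the signature formula for parabolic cohomology of the local system to the relative group $H^1(M,\Sigma;\mathbb{C})$, where the form acquires the kernel of dimension $n_0$, and verify the rank drop is one per trivial $\lambda_j$ rather than inferring it from a dimension count you have not independently justified.

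The paper makes the honest computation tractable with one organizing idea your proposal lacks: after disposing of the case $\rho(g_1g_2)=\rho(g_2g_3)=1$ by hand, it assumes $\rho(g_1g_2)\neq 1$ and splits $H^1(\rho)=H^1_a\oplus H^1_{a'}$ with $H^1_a=\{(a,b,0,d)\}$ and $H^1_{a'}=\{(0,b,c,d)\}$, verifies directly from (9) that these two lines are $A_G$-orthogonal, and then evaluates the sign of $A_G$ on an explicit generator of each line. Each line carries a triple of angles (for $H^1_a$: $\arg\rho(g_1)$, $\arg\rho(g_2)$, $\arg\rho(g_3g_4)$) summing to $2\pi$ or $4\pi$, so the sign question becomes a one-dimensional check, and the degenerate directions appear as explicit null generators such as $(t,-t,0,0)$ when $\rho(g_2)=1$, which is how $n_0$ is counted without any overshooting. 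If you instead pursue your parametrization, note that solving for $(c,d)$ in terms of $(a,b)$ requires the minor $\lambda_2\lambda_3(\lambda_4-1)$ to be nonzero, i.e.\ $\rho(g_4)\neq 1$, so the choice of coordinates must itself vary case by case; the paper's choice of solving for $(b,d)$ in terms of $(a,c)$ needs only $\rho(g_3g_4)\neq 1$, which is exactly the standing assumption.
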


\begin{proof}
In the case when $\rho(g_1g_2)=\rho(g_2g_3)=1$ then $a=c$, $b=d$, and $A_G$ is $2|G|$ times the area of parallelogram with side $a$ and $b$, i.e. proportional to the cross product of two vectors on the complex plane, which has signature $(0,1,1)=(4-{\theta_1+\theta_2\over 2\pi},{\theta_2\over 2\pi}-1,{\theta_1\over 2\pi}-1)$.\\

Now we consider the case when $\rho(g_1g_2)\neq 1$ or $\rho(g_2g_3)\neq 1$. Without losing generality we assume $\rho(g_1g_2)\neq 1$. From equations (1), (3) we know that any $(a,b,c,d)\in H^1(\rho)$ satisfies
\begin{equation}
a+b+c+d=0
\end{equation}
\begin{equation}
a+\rho(g_2)b+\rho(g_2g_3)c+\rho(g_2g_3g_4)d=0
\end{equation}
Because $\rho(g_3g_4)=\rho(g_1g_2)^{-1}\neq 1$, we can solve $(b,d)$ from these two equations as linear functions of $(a,c)$, i.e. rewrite these equations can be written as $(b,d)=(a,c)A$ where $A$ is a 2-by-2 matrix.\\

Consider subspaces $H^1_a=\{(a,b,0,d)\in H^1(\rho)\}$, $H^1_{a'}=\{(0,b,c,d)\in H^1(\rho)\}$, then by the previous arguments $\dim(H^1_a)=\dim(H^1_{a'}=1$ and $H^1(\rho)=H^1_a\oplus H^1_{a'}$. We will show that they are orthogonal subspaces under $A_G$. For any $(a,b,0,d)\in H^1_a(\rho), (0,b',c',d')\in H^1_{a'}(\rho)$, because $(*,*)_G$ is $G$-invariant and $d^1(a,b,0,d)=d^1(0,b',c',d')=0$, we have
\begin{align*}
A_G([a,b,0,d],[0,b',c',d']) &= {1\over 2i}(-(a,b')_G+(d,c')_G+(a,h_2b')_G-(h_2h_3h_4d,h_2h_3c')_G)\\
&= {1\over 2i}((b,b')_G+(d,b')_G-(d,b')_G-(d,d')_G-(h_2b,h_2b')_G\\
-&(h_2h_3h_4d,h_2b')_G+(h_2h_3h_4d,h_2h_3h_4d')_G+(h_2h_3h_4d,h_2b')_G)\\
&= 0
\end{align*}
\noindent In other words, $A_G$ is diagonalized under $H^1(\rho)=H^1_a\oplus H^1_{a'}$.\\

Now we show that the signature of $A_G$ on $H^1_a$ is $(3-{\theta_{1a}+\theta_{2a}\over 2\pi},{\theta_{2a}\over 2\pi}-1,{\theta_{1a}\over 2\pi}-1)$, where $\theta_{1a}= arg(\rho(g_1))+arg(\rho(g_2))+arg(\rho(g_3g_4))$, $\theta_{2a}= arg(\rho(g_1)^{-1})+arg(\rho(g_2)^{-1})+arg(\rho(g_3g_4)^{-1})$. From (10) and (11) we know that 
\begin{equation}
H^1_a=\{t(\rho(g_2)-\rho(g_1^{-1}),\rho(g_1^{-1})-1,0,\rho(g_2)-1):t\in D_\rho\}
\end{equation}
If $\rho(g_2)=0$, equation (12) becomes $H^1_a=\{(t,-t,0,0):t\in D_\rho\}$. If $\rho(g_1)=0$, (12) becomes $H^1_a=\{(t,0,0,-t):t\in D_\rho\}$. In both cases the signature is $(1,0,0)=(3-{\theta_{1a}+\theta_{2a}\over 2\pi},{\theta_{2a}\over 2\pi}-1,{\theta_{1a}\over 2\pi}-1)$. If neither $\rho(g_1)$ nor $\rho(g_2)$ is 1, $\theta_{1a}+\theta_{2a}=6\pi$, and $\theta_{1a}$ is either $2\pi$ or $4\pi$. From (9) and (12) we know that $A_G$ is positive on $H^1_a$ definite when $\theta_{1a}=2\pi$ and negative definite on $H^1_a$ when $\theta_{1a}=4\pi$, i.e. the signature of $A_G$ on $H^1_a$ is $(3-{\theta_{1a}+\theta_{2a}\over 2\pi},{\theta_{2a}\over 2\pi}-1,{\theta_{1a}\over 2\pi}-1)$.\\

We can calculate the signature of $A_G$ on $H^1_{a'}$ similarly. Because $A_G$ is diagonalized under $H^1(\rho)=H^1_a\oplus H^1_{a'}$, the $n_0$, $n_+$ and $n_-$ of $A_G$ on $H^1(\rho)$ can be obtained by adding the $n_0$, $n_+$ and $n_-$ of $A_G$ on $H^1_a$ and $H^1_{a'}$.
\end{proof}

\section{The subgroup $\Gamma_1$ and triangle groups}

 In this section we introduce a subgroup $\Gamma_1$ of $\Gamma$ of finite index, which is easier to work with than $\Gamma$. In Section 6, we will give a criteria for non-discreteness of the action of $\Gamma$ by analyzing the action of this subgroup of finite index.\\
 
 There is a homomorphism $D: \mathbf{Aff}\rightarrow SL(2,\mathbb{Z})$ that sends an affine diffeomorphism to its derivative. Because elements of $\mathbf{Aff}$ preserves $\Sigma$, $\ker(D)$ is finite. Consider two elements in $\Gamma$ which are liftings of the horizontal and vertical Dehn twists of the pillowcase 
$$\gamma_1=s_{(g_1,g_2,g_3,g_4)}s_{(g_2,g_1,g_3,g_4)}$$ 
$$\gamma_2=t_{(g_1,g_2,g_3,g_4)}s_{(g_2,g_3,g_4,g_1)}s_{(g_3,g_2,g_4,g_1)}t^{-1}_{(g_1,g_2,g_3,g_4)}$$
 $D\gamma_1$ and $D\gamma_2$ generates the level 2 congruence subgroup of $SL(2,\mathbb{Z})$, hence the group generated by them is a subgroup of $\mathbf{Aff}$ of finite order. From (5), (6) we have:
\begin{equation}
\gamma_1^*(a,b,c,d)=(g_1g_2a, b+a-g_1a,c,d+g_1a-g_1g_2a)
\end{equation}
\begin{equation}
\gamma_2^*(a,b,c,d)=(a+g_2b-g_2g_3b,g_2g_3b,c+b-g_2b,d)
\end{equation}
Denote the group generated by $\gamma_1$ and $\gamma_2$ as $\Gamma_1$. When restricted to $H^1(\rho)$,
\begin{equation}
\gamma_1^*(a,b,c,d)=(\rho(g_1g_2)a, b+a-\rho(g_1)a,c,d+\rho(g_1)a-\rho(g_1g_2)a)
\end{equation}
\begin{equation}
\gamma_2^*(a,b,c,d)=(a+\rho(g_2)b-\rho(g_2g_3)b,\rho(g_2g_3)b,c+b-\rho(g_2)b,d)
\end{equation}
When $\rho=1$ is the trivial representation, the $\Gamma_1$ action on $H^1(\rho)$ is trivial. When $\rho$ is non-trivial, the $\Gamma$ action on $H^1(\rho)=\mathbb{C}^2$ induces an action on $\mathbb{CP}^1$ under projectivization. The map on $\mathbb{CP}^1$ induced by $\gamma_1$ is parabolic if and only if $\rho(g_1g_2)=1$. Similarly, $\gamma_2$ is parabolic if and only if $\rho(g_2g_3)=1$. If they are not parabolic they are elliptic.\\

\begin{rem}Furthermore, when $\rho(g_2)=1$ and all other $\rho(g_j)\neq 1$, the $\Gamma_1$ action $H^1(\rho)$ is not semisimple. We can see this as follows: by equations (15) and (16), $(1,-1,0,0)$ is the only common eigenvector of $\gamma_1^*$ and $\gamma_2^*$ in $H^1(\rho)$, hence $H^1_a(\rho)=\{(t,-t,0,0)\}$ is the only 1-dimensional subspace of $H^1(\rho)$ invariant under $\Gamma_1$, i.e. in this case the $\Gamma_1$ action on $H^1(\rho)$ is not semisimple. $H^1_a(\rho)$ is also the kernel of the projection $H^1(M,\Sigma;\mathbb{C})\rightarrow H^1(M;\mathbb{C})$ restricted to $H^1(\rho)$.\end{rem}

Hence, we have:
\begin{proof}[Proof of Theorem 1.2] 
(1),(3) follows from Theorem 4.1. (2) follows from Remark 3. 
\end{proof}

The Hodge norm $A_G$ on $H^1(\rho)$ induces a metric, hence a geometric structure on the projectivization $\mathbb{P}(H^1(\rho))=\mathbb{CP}^1$ invariant under the $\Gamma$-action. When $A_G$ is positive definite or negative definite, it induces a spherical structure on $\mathbb{CP}^1$. When $A_G$ has signature $(1,0,1)$, it induces a Euclidean structure on $\mathbb{CP}^1-[0:1]$. When $A_G$ has signature $(1,1,0)$, it induces a Euclidean structure on $\mathbb{CP}^1-[1:0]$. Finally, when the signature of $A_G$ is $(0,1,1)$, it induces a hyperbolic structure on a disc $D$ in $\mathbb{P}H^1(\rho)$, which consists of the image $\{\alpha\in H^1(\rho):A(\alpha,\alpha)>0\}$. The case (3) of Theorem 1.2 corresponds to the elliptic and parabolic cases, while case (2) of Theorem 1.2 corresponds to the Euclidean case. In all these cases, we will show that the action of $\Gamma_1$ is through an index-2 subgroup of the triangle group, which proves Theorem 1.3.

\begin{proof}[Proof of Theorem 1.3] 
In the spherical case, all the elements in $\Gamma_1$ are rotations hence has a pair of fixed points. Let $P_1$ be a fixed point of $\gamma_1$, $P_2$ be a fixed point of $\gamma_2$, $P_3$ be a fixed point of $\gamma_2^{-1}\gamma_1$, let $\Delta P_1P_2P_3$ and $\Delta P_1P_2\gamma_1(P_3)$ be spherical triangles formed by the shortest geodesics between these points, then they are related by reflection along $P_1P_2$, and $\gamma_1$ can be presented as reflection along first $P_1P_3$ then $P_1P_2$, while $\gamma_2$ is reflection along $P_2P_3$ followed by reflection along $P_1P_2$, hence $\Gamma_1$ acts through a subgroup of the triangle reflection group $T$ corresponding to $\Delta P_1P_2P_3$. Because $\Gamma_1$ action preserves orientation, its image under the action is contained in the index-2 subgroup consisting of compositions of even number of reflections along the three sides of the triangle.\\

On the other hand, a reflection along $P_1P_2$ then $P_1P_3$ is the same as the action of $\gamma_1^{-1}$, a reflection along $P_1P_2$ then $P_2P_3$ is the same as the action $\gamma_2^{-1}$, a reflection along $P_1P_3$ then $P_2P_3$ is the same as the action $\gamma_2^{-1}\gamma_1$, and a reflection along $P_2P_3$ then $P_1P_3$ is the same as the action $\gamma_1^{-1}\gamma_2$, the composition of any sequence of even number of reflections along $P_1P_2$, $P_2P_3$ and $P_1P_3$ can be written as the action of a composition of $\gamma_1$, $\gamma_2$ and their inverses, hence $Gamma_1$ acts through the index-2 subgroup of $T$ consisting of compositions of even number of reflections.\\

In the euclidean case the argument is the same. In the hyperbolic case, when $\gamma_1$, $\gamma_2$ and $\gamma_2^{-1}\gamma_1$ are all elliptic elements, we choose fixed points $P_1$, $P_2$ and $P_3$ all in the disc $D$ described above, then the same argument would work.\\

Finally, in the hyperbolic case, in which $\gamma_1$, $\gamma_2$ or $\gamma_2^{-1}\gamma_1$ is parabolic, choose the corresponding $P_i$ to be the unique fixed point on $\partial D$, then the sides ending at $P_i$ become geodesic rays in $D$ that approaches $P_i$ on the ideal boundary. The same argument will show that the $\Gamma_1$-action is through an index-2 subgroup of a triangle reflection group with certain angles being $0$.

\end{proof}

Suppose elliptic element $\gamma\in PGL(2,\mathbb{C})$ has a fixed point $P\in\mathbb{CP}^1$, then by conjugating with an element in $PGL(2,\mathbb{C})$ we can make $P=[1,0]$ and $\gamma=\left( \begin{array}{cc}
e^{\sqrt{-1}\theta_1} &  0\\
0 &  e^{\sqrt{-1}\theta_2} \end{array} \right)$, which in the tangent space $T_{P}\mathbb{CP}^1$ induces a rotation of angle $\theta_2-\theta_1$. Hence, an elliptic element in $\Gamma_1$ induces a rotation and the rotation angle is the argument of the quotient of the 2 eigenvalues, hence can be calculated with (15) and (16). The angles of the triangle at $P_1$, $P_2$ and $P_3$ are half of the rotation angles of $\gamma_1$, $\gamma_2^{-1}$ and $\gamma_2^{-1}\gamma_1$ respectively. We will describe these triangle groups and their applications in the next two sections.

\section{The spherical case and polyhedral groups}

In this section we will describe the spherical case, and in the next section we will describe the remaining cases.\\

When $A_G$ is positive definite or negative definite, the generators of $\Gamma_1$, $\gamma_1$ and $\gamma_2$, act as finite order rotations with different fixed points, and their orders are the orders of $\rho(g_1g_2)$ and $\rho(g_2g_3)$ in $\mathbb{C}^*$ respectively, hence by the ADE classification \cite{led} of finite subgroups of $SO(3)$ we know that if both the orders of $\rho(g_1g_2)$ and $\rho(g_2g_3)$ are greater than 5 the action of $\Gamma$ on $H^1(\rho)$ can not factor through a discrete group.\\

\begin{proof}[Proof of Proposition 1.4 and Remark 1]: Let $G$ be the subgroup of $(\mathbb{Z}/120)^3$ spanned by $g_1=(20,0,0)$, $g_2=(0,15,0)$, $g_3=(0,0,12)$, $g_4=(100,105,108)$, $\mathbf{g}=(g_1,g_2,g_3,g_4)$, $M=M(G,\mathbf{g})$, $\rho(g_1)=e^{\pi i/3}$, $\rho(g_2)=e^{\pi i/4}$, $\rho(g_3)=e^{\pi i/5}$, $\rho(g_4)=e^{73i\pi/60}$. Then by Theorem 1.1 and Remark 2 $H^1(\rho)$ is invariant under $\mathbf{Aff}$ with an invariant complement, by Section 4 the Hodge form is positive definite on $H^1(\rho)$, and by the argument above the $\mathbf{Aff}$ action on $H^1(\rho)$ is not discrete. In other words, $M$ and the decomposition $H^1(M,\Sigma;\mathbb{C})=H^1(\rho)\oplus H^1(\overline{\rho})\oplus (\bigoplus_{\rho'\neq \rho,\overline{\rho}}H^1(\rho'))$ satisfy all the conditions mentioned in Proposition 1.4. Furthermore, the tangent space of the $GL(2,\mathbb{R})$ orbit is $H^1(\rho_t)$ where $\rho_t(g_j)=-1$ for $j=1,2,3,4$. Because $\rho_t\neq \rho$, $\rho_t\neq\overline{\rho}$, the decomposition also satisfy conditions in Remark 1.
\end{proof}

Furthermore, by Theorem 1.3 and \cite{cox}, we can list all possible 4-tuples \\$(\rho(g_1),\rho(g_2),\rho(g_3),\rho(g_4))$ such that the $\Gamma_1$ action on $\mathbb{P}(H^1(\rho))=\mathbb{CP}^1$ factors through a finite group. Denote the arguments of $\rho(g_j)$ as $2t_j\pi$, $j=1,\dots 4$, then the $\Gamma_1$ action is finite if and only if $t_1$, $t_2$, $t_3$, $t_4$ is a permutation of one of the 4-tuples in the table below:
\begin{table}[H]
\begin{tabular}{cccc|c}
\hline
 $t_1$&$t_2$&$t_3$&$t_4$& Group\\ \hline
 d/2n&d/2n&(n-d)/2n&(n-d)/2n&Dihedral group\\
 1/12 &1/4 &1/4&5/12&Tetrahedral group\\
 1/24 &5/24&7/24&11/24&Octahedral group\\
 1/60&11/60&19/60&29/60&Icosahedral group\\
 1/6&1/6&1/6&1/2&Tetrahedral group\\
 1/12&1/6&1/6&7/12&Octahedral group\\
 1/30&19/30&1/6&1/6&Icosahedral group\\
1/30&3/10&3/10&11/30&Icosahedral group\\
1/20&3/20&7/20&9/20&Icosahedral group\\
1/15&2/15&4/15&8/15&Icosahedral group\\
1/10&3/10&3/10&3/10&Icosahedral group\\
1/10&1/10&7/30&17/30&Icosahedral group\\
1/10&1/10&1/10&7/10&Icosahedral group\\
7/60&13/60&17/60&23/60&Icosahedral group\\
1/6&1/6&7/30&13/30&Icosahedral group\\
\hline
\end{tabular}
\end{table}

\begin{table}[H]
\begin{tabular}{cccc|c}
\hline
 $t_1$&$t_2$&$t_3$&$t_4$& Group\\ \hline
 (2n-d)/2n&(2n-d)/2n&(n+d)/2n&(n+d)/2n&Dihedral group\\
 7/12 &3/4 &3/4&11/12&Tetrahedral group\\
 13/24 &19/24&17/24&23/24&Octahedral group\\
 31/60&41/60&49/60&59/60&Icosahedral group\\
 1/2&5/6&5/6&5/6&Tetrahedral group\\
 5/12&5/6&5/6&11/12&Octahedral group\\
 5/6&5/6&11/30&29/30&Icosahedral group\\
19/30&7/10&7/10&29/30&Icosahedral group\\
11/20&13/20&17/20&19/20&Icosahedral group\\
7/15&11/15&13/15&14/15&Icosahedral group\\
7/10&7/10&7/10&9/10&Icosahedral group\\
13/30&23/30&9/10&9/10&Icosahedral group\\
3/10&9/10&9/10&9/10&Icosahedral group\\
37/60&43/60&47/60&53/60&Icosahedral group\\
17/30&23/30&5/6&5/6&Icosahedral group\\
\hline
\end{tabular}
\end{table}

Here $d$ and $n$ are positive integers, and the last column shows the discrete subgroup of $SO(3)$ it corresponds to.\\

Examples of $M$ and $\rho$ that satisfies the conditions needed for Proposition 1.4 and Remark 1 can therefore be built from any 4-tuple of positive rational numbers not on the above list that sum up to 1 or 3. For example, $(1/8,1/8,1/8,5/8)$ is not on the list, so let $G=\mathbb{Z}/8$, $g_1=g_2=g_3=1$, $g_4=5$, $\rho(g_1)=\rho(g_2)=\rho(g_3)=e^{\pi i/4}$, $\rho(g_4)=e^{5\pi i/4}$ satisfies those conditions. This is an abelian cover of flat pillowcase that satisfy those conditions with the smallest number of squares. It has $16$ squares in total and is in the stratum $H(3,3,3,3)$.\\

Explicit elements of $N=H^1(\rho)$ and $Re(N)=Re(H^1(\rho))$ can also be calculated by solving (10) and (11). For example, in the abovementioned $G=\mathbb{Z}/8$, $g_1=g_2=g_3=1$, $g_4=5$, $\rho(g_1)=\rho(g_2)=\rho(g_3)=e^{\pi i/4}$, $\rho(g_4)=e^{5\pi i/4}$ case, $\alpha\in H^1(M,\Sigma,\mathbb{R})$, $\alpha(e^1_k)=-2\cos(\pi/8)\cos(k\pi/4)$, $\alpha(e^2_k)=\cos(\pi/8+k\pi/4)$, $\alpha(e^3_k)=0$, $\alpha(e^4_k)=\cos(\pi/8-k\pi/4)$ is an element in $Re(H^1(\rho))$.\\

\section{The hyperbolic and Euclidean cases and triangle groups}

In this section we complete the description of $\Gamma_1$-action by describing the signature $(0,1,1)$, $(1,0,1)$, $(1,1,0)$ cases as triangle groups. In the $(0,1,1)$ case, $\sum_j{arg}(\rho(g_j))=4\pi$. Without losing generality we assume ${arg}(\rho(g_1))+{arg}(\rho(g_2))\geq 2\pi$, ${arg}(\rho(g_2))+{arg}(\rho(g_3))\geq 2\pi$.\\

By calculation based on the observations in Section 5, $\gamma_1$ acts on $D$ as a rotation by ${arg}(\rho(g_1g_2))$ when $\rho(g_1g_2)\neq 1$ and as a parabolic transformation when $\rho(g_1g_2)=1$, $\gamma_2$ acts on $D$ as a rotation by ${arg}(\rho(g_2g_3))$ when $\rho(g_2g_3)\neq 1$ and as a parabolic transform when $\rho(g_2g_3)=1$. Furthermore, $\gamma_1$ and $\gamma_2$ generate an index 2 subgroup of a triangle group, and the angles of the triangle are $|\pi-({arg}(\rho(g_1))+{arg}(\rho(g_2)))/2|$, $|\pi-({arg}(\rho(g_2))+{arg}(\rho(g_3)))/2|$ and $|\pi-({arg}(\rho(g_1))+{arg}(\rho(g_3)))/2|$. \cite{wright2012} has done the same calculation and based on it has calculated the Lyapunov exponents from the area of this triangle.\\

When the signature of $A_G$ is $(1,1,0)$ or $(1,0,1)$, only one $\rho(g_j)$ is equal to $1$. Without losing generality assume $\rho(g_2)=1$, then $(a,b,c,d)\mapsto b/a$ sends $H^1(\rho)$ to $\overline{\mathbb{C}}$, and under this map $\Gamma_1$ acts on $\mathbb{C}=\overline{\mathbb{C}}-\{\infty\}$ as an index-2 subgroup of a Euclidean triangle group. The angles of the triangle are ${arg}(\rho(g_1))/2$, ${arg}(\rho(g_3))/2$ and ${arg}(\rho(g_4))/2$ when 
$${arg}(\rho(g_1))+{arg}(\rho(g_3))+{arg}(\rho(g_4))=2\pi$$
 i.e. when the signature of $A_G$ is $(1,1,0)$. The angles of the triangles are $\pi-{arg}(\rho(g_1))/2$, $\pi-{arg}(\rho(g_3))/2$ and $\pi-{arg}(\rho(g_4))/2$ when 
$${arg}(\rho(g_1))+{arg}(\rho(g_3))+{arg}(\rho(g_4))=4\pi$$
 i.e. when the signature of $A_G$ is $(1,0,1)$.\\

When two of the four $\rho(g_j)$ are equal to $1$,  then the $\Gamma_1$ action on $H^1(\rho)$ factors through a finite abelian group. Finally, when $\rho=1$, the $\Gamma_1$ action on $H^1(\rho)$ is trivial, and the $\Gamma$-action is through a finite permutation group.\\

\bibliographystyle{alpha} 
\bibliography{abelr}

\end{document}